\theoremstyle{plain}
\newtheorem{theorem}{Theorem}[section]
\newtheorem{conjecture}[theorem]{Conjecture}
\newtheorem{proposition}[theorem]{Proposition}
\newtheorem{corollary}[theorem]{Corollary}
\newtheorem{lemma}[theorem]{Lemma}
\newcommand{\HT}[1]{\hat{\HH}{}^{#1}}
\theoremstyle{definition}
\newtheorem{definition}[theorem]{Definition}
\newtheorem{remark}[theorem]{Remark}
\DeclareMathOperator{\Gal}{Gal}
\DeclareMathOperator{\HH}{H}
\DeclareMathOperator{\Nm}{Nm}
\DeclareMathOperator{\Hom}{Hom}
\DeclareMathOperator{\Spec}{Spec}
\DeclareMathOperator{\Res}{Res}
\DeclareMathOperator{\Fr}{Fr}
\DeclareMathOperator{\Ind}{Ind}
\DeclareMathOperator{\Z}{Z}
\DeclareMathOperator{\GL}{GL}
\DeclareMathOperator{\PGL}{PGL}
\DeclareMathOperator{\SL}{SL}
\newcommand{\TT}{\mathcal{T}}
\newcommand{\C}{\mathcal{C}}
\newcommand{\CC}{\mathbb{C}}
\newcommand{\CCx}{\mathbb{C}^\times}
\newcommand{\OK}{\mathcal{O}_K}
\newcommand{\OKn}{\mathcal{O}_{K_n}}
\newcommand{\PK}{\mathcal{P}_K}
\newcommand{\PL}{\mathcal{P}_L}
\newcommand{\OL}{\mathcal{O}_L}
\newcommand{\ZZ}{\mathbb{Z}}
\newcommand{\QQ}{\mathbb{Q}}
\newcommand{\Gm}{\mathbb{G}_m}
\newcommand{\Kx}{K^\times}
\newcommand{\Lx}{L^\times}
\newcommand{\Fq}{\mathbb{F}_q}
\newcommand{\Fqb}{\bar{\mathbb{F}}_q}
\newcommand{\Weil}{\mathcal{W}}
\newcommand{\Lpack}{\mathcal{L}}
\newcommand{\Pmin}{P_G^{\min}}
\newcommand{\bmu}{\boldsymbol\mu}
\newcommand{\mumin}{\bmu^{\min}}
\newcommand{\st}{\ensuremath{\ \ \ \vert\ }}
\newcommand{\invlim}[1]{\varprojlim_{#1}}
\newcommand{\Normalizer}[2]{\operatorname{N}_{#2}(#1)}
\newcommand{\Thadm}{T^*_{\operatorname{adm}}}
\newcommand{\Thinadm}{T^*_{\operatorname{in}}}
\newcommand{\hatT}{T^*}
\begin{document}
\title[Rectifiers and the local Langlands Correspondence]{Rectifiers and the local Langlands Correspondence: the unramified case}
\author{Moshe Adrian}
\email{madrian@math.utah.edu}
\address{Department of Mathematics, University of Utah, Salt Lake City, UT 84112, U.S.A.}
\author{David Roe}
\email{roed.math@gmail.com}
\address{Department of Mathematics, University of Calgary, Calgary, AB T2N 1N4, Canada}
\classification{22E50}
\keywords{Langlands, rectifiers}
\thanks{The second author was supported by the Pacific Institute for the Mathematical Sciences}

\begin{abstract}

We generalize the rectifier of Bushnell and Henniart,
which occurs in the local Langlands correspondence for
$\GL_{n}(K)$, to certain Langlands parameters for
unramified connected reductive groups.

\end{abstract}

\maketitle

\section{Introduction} \label{section:intro}

Let $G$ be a connected reductive group defined over a $p$-adic field $K$.
The local Langlands conjecture predicts the existence of a finite to one map
from the set of isomorphism classes of irreducible admissible representations
of $G(K)$ to the set of Langlands parameters for $G(K)$.

There has been a significant amount of progress in recent years
focusing on supercuspidal representations of $G(K)$.  Bushnell-Henniart \cite{bushnell-henniart:10a},
DeBacker-Reeder \cite{reeder-debacker:09a}, Kaletha \cite{kaletha:13a} and Reeder \cite{reeder:08a}
approach the task of constructing $L$-packets by first attaching
a character of an elliptic torus to a Langlands parameter and
then associating a collection of supercuspidal representations to this character.
Their constructions all use the local Langlands correspondence for tori in some way,
but the image of the Langlands parameter $\Weil_K \rightarrow {}^L G$ is not
necessarily contained within the $L$-group of a maximal torus.  The different authors
remedy this situation in various ways.

For an example where the image does not land in an $L$-group of a maximal torus,
consider $G = \PGL_{2}(K)$.  Suppose that $p \neq 2$ and that
$\varphi : \Weil_K \rightarrow \SL_{2}(\CC)$ is an
irreducible representation.  Then there is a tamely ramified quadratic
extension $L/K$ and a character
$\chi$ of $\Lx$, trivial on the norms $\Nm_{L/K}(\Lx)$, so that
$\varphi = \Ind_{\Weil_L}^{\Weil_K}(\chi)$.  The image of $\varphi$
is contained in the normalizer of the dual torus $\hat{T}$, a non-split
extension of $\Gal(L/K)$ by $\hat{T}$, but not in the $L$-group of any torus.

The group $\Lx / \Nm_{L/K}(\Lx)$ appears as a cover of the elliptic torus $L^1$ of norm $1$ elements in $L$:
\begin{align*}
1 \rightarrow \mathbb{Z} / 2 \mathbb{Z} \rightarrow \Lx / \Nm_{L/K}(\Lx) &\rightarrow L^1 \rightarrow 1 \\
x \Nm_{L/K}(L^{\times}) &\mapsto x / \sigma(x);
\end{align*}
here $\sigma$ generates $\Gal(L/K)$.  In particular, the Langlands parameter
$\varphi$ naturally provides a character $\chi$, not of the elliptic torus
$L^1 \subset \PGL_2(K)$,
but of the two-fold cover $\Lx / \Nm_{L/K}(\Lx)$.  We can obtain a character of $L^1$
by twisting $\chi$ by a genuine character of
$\Lx / \Nm_{L/K}(\Lx)$.  The twist giving the correct supercuspidal representation of $\PGL_2(K)$
is precisely what appears in Bushnell and Henniart \cite{bushnell-henniart:06a, bushnell-henniart:10a}.

In this paper, we generalize Bushnell and Henniart's
rectifier to groups other than $\GL_n(K)$.  In particular, we define rectifiers for
unramified minisotropic tori $T$ in connected reductive groups $G$.
Benedict Gross' recent construction of groups of type L provides a framework for
us to define rectifiers and admissible pairs in the general setting.
We show in Theorem \ref{thm:unique_semisimple} that rectifiers for semisimple $G$ exist
and are unique up to equivalence. In the setting of depth zero
supercuspidal representations of $\GL_{n}(K)$, Theorem \ref{thm:bh_agreement}
gives the compatibility of our rectifier with that of Bushnell and Henniart.
We note that there is an obstruction to proving compatibility in the positive depth case.
In the depth zero case, Deligne-Lusztig representations provide a canonical way of
constructing supercuspidal representations. However, in positive depth there are many:
Adler \cite{adler:98a}, Howe \cite{howe:77a}, Bushnell-Henniart \cite{bushnell-henniart:10a},
Bushnell-Kutzko \cite{bushnell-kutzko:AdmissibleDual}, and Yu \cite{yu:03a}.
In the positive depth setting, the rectifier will depend on the methods
used to construct representations from the character of $T(K)$, and
our rectifier indeed differs from that of Bushnell and Henniart in positive depth.

Bushnell and Henniart motivate their rectifier as follows.
Suppose that $\varphi$ is an \emph{essentially tame} supercuspidal Langlands parameter for
$\GL_n(K)$.  The local Langlands correspondence for tori then yields a degree $n$ extension
$L/K$ and a character $\xi$ of $L^{\times}$.  We now fix a construction $\chi \mapsto \pi_{\chi}$
of supercuspidal representations of $\GL_n(K)$ from \emph{admissible} characters of $L^{\times}$.
Then the rectifier of $\xi$ is a character $\mu_{\xi}$ of $L^{\times}$ such that
$\varphi \mapsto \pi_{\xi \cdot \mu_{\xi}}$ is the local Langlands correspondence for $\GL_n(K)$.

We generalize their notion of rectifier to unramified connected reductive groups $G$.
Suppose that $\varphi : \Weil_K \rightarrow {}^L G$ is a supercuspidal Langlands parameter
that factors through the normalizer of a maximal torus.
The local Langlands for tori again provides a canonical way to proceed.  Assuming a mild
cohomological condition, one
obtains from $\varphi$ a character $\xi$ of a cover of an elliptic torus.
After fixing an association $\chi \mapsto \Lpack(\chi)$ of supercuspidal $L$-packets of $G$
to \emph{admissible} characters of this cover, the rectifier of $\xi$ is a character
$\mu_{\xi}$ of the cover such that $\varphi \mapsto \Lpack(\xi \cdot \mu_{\xi})$
is the local Langlands correspondence for $G(K)$.

We now present an outline of the paper.  In \S\ref{section:BH_recall} we recall
the notion of rectifier due to Bushnell and Henniart and describe
the rectifier in the setting that we will need.  In \S\ref{section:padic_tori}
we present some results about Tate cohomology of $p$-adic tori that will be used
in the rest of the paper.  In \S\ref{section:groups_of_type_L} we review
the theory of ``groups of type L.''  In
\S\ref{section:gross_debacker_reeder} we describe the relationship between the
construction of Gross, via groups of type L, and the constructions of
DeBacker-Reeder and Reeder.  In \S\ref{Q_T} we study how translation by a character affects
the association $\chi \mapsto \Lpack(\chi)$.
In \S\ref{section:general_rectifiers} we
introduce our notion of rectifier and prove our main result, Theorem \ref{thm:unique_semisimple}.
Finally, in \S\ref{section:BH_compat} we show that our rectifier is compatible with
the rectifier of Bushnell and Henniart in the setting of depth zero
supercuspidal representations of $\GL_n(K)$.

\subsection*{Acknowledgements}

This paper has benefited from conversations with Jeffrey Adams, Jeffrey Adler, Colin Bushnell, Andrew Fiori, Guy Henniart, Gordan Savin, and Geo Kam-Fai Tam.  We thank them all.

\section{Notation and Preliminaries} \label{section:notation}

Throughout, $K$ will denote a nonarchimedean local field of
characteristic zero, $\OK$ its ring of integers, $k$ its residue field,
$\PK$ the maximal ideal in $\OK$ and $\varpi$ a fixed uniformizer.
Write $K_n$ for the unramified extension of $K$ of degree $n$, $k_n$ for
the degree $n$ extension of $k$,
and set $\Gamma_n = \Gal(K_n/K) = \Gal(k_n/k)$.

A geometric Frobenius is an element of $\Gal(\bar{K}/K)$
inducing the automorphism $x \mapsto x^{1/p}$ of $\bar{k}$.  Under the
Artin reciprocity map of local class field theory the choice of $\varpi$
determines a geometric Frobenius $\Fr$ \cite[\S 2]{serre:LocalClassFieldThy}.

If $\chi : K^{\times} \rightarrow \mathbb{C}^{\times}$ is a character, we define
the \emph{depth} of $\chi$ to be the smallest integer $r$ such that
$\chi|_{1 + \PK^{r+1}} \equiv 1$ and
$\chi|_{1 + \PK^{r}} \not\equiv 1$.

If $T$ is a torus defined over $K$ we write $X^*(T)$
for the character lattice $\Hom_{\bar{K}}(T, \Gm)$ and $X_*(T)$ for the
cocharacter lattice $\Hom_{\bar{K}}(\Gm, T)$ \cite[\S 16.2]{humphreys:LinAlgGrps}.
$T$ will split over an extension
$L$ of $K$ if and only if $\Gal(\bar{K}/L)$ acts trivially on $X^*(T)$.
We may thus define \emph{the} splitting field $L$ of $T$ as the
minimal extension of $K$ splitting $T$; note that $L$ is necessarily
Galois over $K$.  Write $\Gamma$ for $\Gal(L/K)$; $X_*(T)$, $X^*(T)$ and $T(L)$
are all $\Gamma$-modules.

Suppose now that $T \subset G$ for a connected reductive group $G$ over $K$.
We will write $\hat{T} \subset \hat{G}$ for the dual torus in the complex dual group of $G$ \cite[\S I.2]{borel:79a}.
Let $N$ be the normalizer $\Normalizer{T}{G}$ of $T$ in $G$ and define $W = N/T$;
set $\hat{N} = \Normalizer{\hat{T}}{\hat{G}}$ and
$\hat{W} = \hat{N}/\hat{T}$.  The identification of $X^*(T)$ and $X_*(\hat{T})$
yields a canonical anti-isomorphism between $W$ and $\hat{W}$.
Note that $W$ is a scheme over $K$; in general $W(K) \ne N(K) / T(K)$.

Write $\Nm$ for the norm map
\begin{align*}
T(L) &\rightarrow T(K) \\
t &\mapsto \prod_{\sigma \in \Gamma} \sigma(t)
\end{align*}
and for its restriction to $X_*(T)$.

The following theorem, due to Lang \cite{lang:56a}, underpins the facts in
\S\ref{section:padic_tori} on tori over $p$-adic fields.
Let $H$ be a commutative connected algebraic group over a
finite field $k$, and suppose $H$ splits over $k_n$.  Denote by $\HT{i}$ the $i^{\mathrm{th}}$
Tate cohomology group.

\begin{theorem} \label{thm:lang}
$\HT{i}(\Gamma_n, H(k_n)) = 0$ for all $i$.
\end{theorem}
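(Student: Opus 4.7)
The plan is to use the standard two-step route: first reduce the problem using periodicity, then verify $\HT{1}$ vanishes via Lang's theorem, and finally deduce $\HT{0} = 0$ by a Herbrand quotient argument.

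Since $\Gamma_n$ is cyclic, Tate cohomology is $2$-periodic, so it suffices to show that $\HT{0}(\Gamma_n, H(k_n)) = 0$ and $\HT{1}(\Gamma_n, H(k_n)) = 0$.

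For the vanishing of $\HT{1}$, I would take a $1$-cocycle, which since $\Gamma_n$ is cyclic generated by Frobenius $F$ is represented by an element $y \in H(k_n)$ with $\Nm(y) = 1$. By Lang's theorem applied to the connected algebraic group $H$ over $k$, the Lang map $x \mapsto x^{-1} F(x)$ is surjective on $H(\bar{k})$, so there exists $x \in H(\bar{k})$ with $x^{-1} F(x) = y$. Using commutativity of $H$ and iterating, I get
\[
F^n(x) = x \cdot y \cdot F(y) \cdots F^{n-1}(y) = x \cdot \Nm(y) = x,
\]
so in fact $x \in H(k_n)$ and $y$ is a coboundary. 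Hence $\HT{1}(\Gamma_n, H(k_n)) = 0$.

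For the vanishing of $\HT{0}$, I would invoke the Herbrand quotient. Since $H$ is a commutative algebraic group of finite type over the finite field $k$ (and splits over $k_n$ so that $H(k_n)$ is in particular a finite abelian group on which $\Gamma_n$ acts), the module $M = H(k_n)$ is finite. For finite modules over a finite cyclic group, the Herbrand quotient $h(M) = |\HT{0}|/|\HT{1}|$ equals $1$. Combined with $|\HT{1}| = 1$ from the previous step, this gives $|\HT{0}| = 1$, i.e.\ $\HT{0}(\Gamma_n, H(k_n)) = 0$. By the $2$-periodicity of cyclic Tate cohomology, all $\HT{i}$ vanish.

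The main subtle point is the verification that the element $x$ produced by Lang over $\bar{k}$ actually lies in $H(k_n)$; this is where the computation $F^n(x) = x \cdot \Nm(y)$ and the hypothesis $\Nm(y)=1$ (together with commutativity of $H$) are used essentially. Everything else is formal manipulation of cyclic Tate cohomology.
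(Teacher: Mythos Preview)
Your argument is correct and follows essentially the same route as the paper: reduce via the $2$-periodicity of Tate cohomology for cyclic groups to two consecutive degrees, and then invoke Lang's theorem. The paper reduces to $i=1,2$ and simply cites Serre's \emph{Algebraic Groups and Class Fields} for those cases, whereas you reduce to $i=0,1$ and spell out the standard proof (Lang's theorem for $\HT{1}$, Herbrand quotient of a finite module for $\HT{0}$); the content is the same.
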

\begin{proof}
Since $\Gamma_n$ is cyclic,
$\HT{i}(\Gamma_n, H(k_n)) \cong \HT{i+2}(\Gamma_n, H(k_n))$ \cite[Thm. 5]{atiyah-wall:CohomologyGrps},
so it suffices to prove the result for $i=1$ and $i=2$, which is done
by Serre \cite[\S VI.6]{serre:AlgGrpsClassFields}.
\end{proof}

\section{Rectifier for $\GL_{n}(K)$} \label{section:BH_recall}

In this section we recall the rectifier of Bushnell and Henniart and their construction of the
essentially tame local Langlands correspondence for $\GL_{n}(K)$.
An irreducible smooth representation of the Weil group $\Weil_K$ of $K$ is
called \emph{essentially tame} if its restriction to wild inertia is a
sum of characters.
\begin{definition}\label{admissiblepairhowe}
Let $L/K$ be an extension of degree $n$, with $n$ coprime to $p$.  A character
$\xi$ of $L^\times$ is \emph{admissible} if
\begin{enumerate}
\item $\xi$ doesn't come via the norm from a subfield of $L$ containing $K$,
\item If $\xi|_{1 + \PL}$ comes via the norm from a subfield $L \supset M \supset K$, then
$L/M$ is unramified.
\end{enumerate}
\end{definition}
There is a natural bijection
$\varphi_{\xi} \leftrightarrow (L/K, \xi)$ between irreducible smooth essentially tame
$\varphi_{\xi} : \Weil_K \rightarrow \GL_{n}(\mathbb{C})$ and
\emph{admissible pairs} $(L/K, \xi)$.
Bushnell and Henniart
construct a map (see \cite{bushnell-henniart:10a})
\begin{equation*}
\left\{
\begin{array}{cc}
\mathrm{isomorphism \ classes \ of} \\
\mathrm{admissible \ pairs}
\end{array}
\right\} \rightarrow \left\{
\begin{array}{cc}
\mathrm{supercuspidal \ representations} \\
\mathrm{of} \ \GL_{n}(K)
\end{array} \right\}
\end{equation*}
$$\hspace{-.5in} (L/K, \xi) \mapsto \pi_{\xi}$$
However, the map $$\varphi_{\xi} \mapsto \pi_{\xi}$$
is not the local Langlands
correspondence because $\pi_{\xi}$ has the wrong central character.
Instead, the local Langlands correspondence is given by
\begin{equation}\label{llcgln}
\varphi_{\xi} \mapsto \pi_{\xi \cdot {}_K \mu_{\xi}} \tag{$\star$}
\end{equation}
for some subtle finite order
character ${}_K \mu_{\xi}$ of $\Lx$.  Since we will not be changing $K$
in this paper we will write $\mu_\xi$ for ${}_K \mu_{\xi}$.

The relation $\star$ does not determine $\mu_{\xi}$ uniquely.  As pointed out
in \cite{bushnell-henniart:10a}, the obstruction to uniqueness revolves around the
group $\GL_2(\mathbb{F}_3)$.  Bushnell and Henniart therefore make the following definition \cite[Def. 1]{bushnell-henniart:10a}.

\begin{definition}\label{rectifierbushnellhenniart}
Let $L/K$ be a finite, tamely ramified field extension of degree $n$.  A \emph{rectifier}
for $L/K$ is a function
$$\bmu : (L/K, \xi) \mapsto \mu_{\xi}$$
which attaches to each admissible pair $(L/K, \xi)$ a character $\mu_{\xi}$ of $L^{\times}$
satisfying the following conditions:
\begin{enumerate}
\item The character $\mu_{\xi}$ is tamely ramified.
\item Writing $\xi' = \xi \cdot \mu_{\xi}$, the pair $(L/K, \xi')$ is admissible and
$\varphi_{\xi} \mapsto \pi_{\xi \cdot \mu_{\xi}}$ is the local Langlands correspondence
for $\GL_n(K)$.
\item If $(L/K, \xi_i), i = 1,2$, are admissible pairs such that $\xi_1^{-1} \xi_2$ is
tamely ramified, then $ \mu_{\xi_1} =  \mu_{\xi_2}$.
\end{enumerate}
\end{definition}

Bushnell and Henniart then prove \cite[Thm. A]{bushnell-henniart:10a}:

\begin{theorem}
Any finite, tamely ramified, field extension $L/K$ admits a unique rectifier
$\bmu : (L/K, \xi) \mapsto \mu_{\xi}$.
\end{theorem}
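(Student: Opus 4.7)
The plan is to establish existence by leveraging the known local Langlands correspondence for $\GL_n(K)$ (due to Harris--Taylor and Henniart), and then extract uniqueness from a careful tame-twisting analysis.  First, given an admissible pair $(L/K, \xi)$, let $\pi(\varphi_\xi)$ denote the supercuspidal representation of $\GL_n(K)$ that the established LLC attaches to $\varphi_\xi = \Ind_{\Weil_L}^{\Weil_K} \xi$.  Since the construction $\chi \mapsto \pi_\chi$ provides a parametrization of essentially tame supercuspidals by isomorphism classes of admissible pairs $(L/K, \chi)$ having the same underlying extension, the representation $\pi(\varphi_\xi)$ is of the form $\pi_{\xi'}$ for some admissible $\xi'$ on $\Lx$, and I would define $\mu_\xi = \xi'/\xi$.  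The main task of the existence step is then to show that $\mu_\xi$ is automatically tamely ramified; this reduces to comparing the depth and wild data of $\xi$ on both sides of the correspondence, using that admissibility of $\xi'$ and $\xi$ forces their wild parts (restrictions to $1 + \PL$) to agree up to a character factoring through the norm to a subfield over which $L$ is unramified.

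Next, I would verify condition (3), the invariance of $\mu_\xi$ under tame twists of $\xi$.  If $\xi_1$ and $\xi_2$ are admissible pairs for the same $L/K$ with $\eta := \xi_1 \xi_2^{-1}$ tamely ramified, one must show that both the construction $\xi \mapsto \pi_\xi$ and the Langlands parametrization $\xi \mapsto \pi(\varphi_\xi)$ transform the same way under multiplication by $\eta$.  For the Langlands side this follows from the behavior of $\varphi_\xi$ under tame twists and the compatibility of the LLC with twisting by characters of $\Kx$ composed with $\det$; for the construction $\xi \mapsto \pi_\xi$, one must unwind Bushnell--Henniart's explicit recipe and see that tame twisting by $\eta$ on $\Lx$ induces a corresponding tame modification of $\pi_\xi$.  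Matching the two yields $\mu_{\xi_1} = \mu_{\xi_2}$.

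For uniqueness, suppose $\bmu$ and $\bmu'$ are two rectifiers, and set $\eta_\xi = \mu_\xi^{-1} \mu'_\xi$.  Then $\eta_\xi$ is a tamely ramified character of $\Lx$ such that $\pi_{\xi \mu_\xi} \cong \pi_{\xi \mu_\xi \eta_\xi}$ for every admissible $\xi$.  Since the construction $\chi \mapsto \pi_\chi$ is essentially injective on admissible characters up to Galois conjugation, $\eta_\xi$ must act trivially, which forces $\eta_\xi = 1$ once enough admissible characters are available for a given $L/K$.  Finally, condition (3) reduces this to checking that $\eta$ is trivial on the group of tamely ramified characters, using the density of admissible characters with prescribed wild parts.

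The main obstacle, as indicated in the text around Definition \ref{rectifierbushnellhenniart}, is the failure of uniqueness coming from the exceptional isomorphism involving $\GL_2(\mathbb{F}_3)$: there exists a nontrivial tame twist which preserves the LLC correspondence on a specific small family of parameters, and precisely condition (3) of the rectifier definition is what rules this pathology out.  I expect the technical heart of the proof to lie in a Gauss-sum-theoretic computation pinning down $\mu_\xi$ up to this ambiguity, combined with a careful case analysis of those tame characters whose twist leaves $\pi_\xi$ invariant, in order to see that (3) forces the correct normalization.
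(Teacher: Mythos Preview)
The paper does not actually prove this theorem: it is quoted verbatim as a result of Bushnell and Henniart, with the attribution ``Bushnell and Henniart then prove \cite[Thm.~A]{bushnell-henniart:10a}'' immediately preceding the statement, and no proof environment follows.  The theorem is recalled purely as background in \S\ref{section:BH_recall}, so there is no in-paper argument for you to compare your proposal against.

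That said, a brief remark on your sketch relative to what Bushnell and Henniart actually do.  Your existence strategy---pull the supercuspidal $\pi(\varphi_\xi)$ from the known LLC, write it as $\pi_{\xi'}$, and set $\mu_\xi = \xi'\xi^{-1}$---is in spirit how one knows \emph{some} correcting character exists, but the real content of \cite{bushnell-henniart:10a} lies elsewhere: they give an explicit closed formula for $\mu_\xi$ (in terms of transfer factors, symplectic signs, and Jacobi sums) and then verify directly that it satisfies the rectifier axioms, rather than deducing tame ramification and condition~(3) abstractly.  Your step ``unwind Bushnell--Henniart's explicit recipe and see that tame twisting by $\eta$ induces a corresponding tame modification of $\pi_\xi$'' is exactly where the difficulty hides; this is not a short verification but the substance of several of their papers.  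Your uniqueness outline is closer to the mark, and you correctly isolate the $\GL_2(\mathbb{F}_3)$ obstruction as the reason condition~(3) is imposed, though ``density of admissible characters with prescribed wild parts'' would need to be made precise (in the paper's own generalization this is handled via the group $Q_T$ of \S\ref{Q_T}, which is shown to be trivial for $\GL_n$).
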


Both the description of and the
intuition behind the rectifiers $\bmu$ have been
studied (see \cite{bushnell-henniart:10a}, \cite{tam:12a}, \cite{adrian:13a}).  In order to
generalize rectifiers to groups other than $\GL_n(K)$ we
will will need a description of the characters $\mu_{\xi}$ in certain cases.
Let us recall some notions from \cite[\S8]{bushnell-henniart:10a}.

Let $(L/K,\xi)$ be an admissible pair and let $i \in \ZZ_{\ge 0}$.
There is a minimal sub-extension $L_i/K$ of $L/K$ such that
$\xi|_{1 + \PL^{i+1}}$ factors through the norm $\Nm_{L/L_i}$.  We say that $i \in \mathbb{Z}$
is a \emph{jump of} $\xi$ \emph{over} $K$ if $i \geq 1$ and $L_{i-1} \neq L_i$.

\begin{proposition}\label{prop:BH_result1}
  Suppose that $(L/K, \xi)$ is an admissible pair, where $L/K$
  is unramified and $\xi$ has depth $0$.
  Then $\mu_{\xi}$ is unramified and
  $\mu_{\xi}(\varpi) = (-1)^{n-1}$.
\end{proposition}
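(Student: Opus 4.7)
The plan is to apply uniqueness of the rectifier from the preceding theorem: it suffices to exhibit a character $\mu$ of $L^\times$ satisfying conditions (1)--(3) of Definition \ref{rectifierbushnellhenniart}, and uniqueness then forces $\mu=\mu_\xi$. I would take $\mu$ to be the unramified character of $L^\times$ with $\mu(\varpi)=(-1)^{n-1}$. Condition (1) is immediate since unramified characters are tame, and condition (3) holds because the prescription depends only on $n=[L:K]$ and not on $\xi$ itself; any other admissible pair $(L/K,\xi')$ with $\xi'/\xi$ tame has depth zero as well and is assigned the same $\mu$.

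The heart of the argument is condition (2), which reduces to a central-character comparison. Since $\mu$ is unramified it is trivial on $\mathcal{O}_L^\times$, so $\xi$ and $\xi\cdot\mu$ agree there. Bushnell and Henniart's construction uses only the restriction to $\mathcal{O}_L^\times$ for the non-central ingredients, so $\pi_{\xi\mu}$ differs from $\pi_\xi$ only by the central twist $\mu|_{K^\times}$, the unramified character of $K^\times$ sending $\varpi$ to $(-1)^{n-1}$.

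On the Langlands side, the predicted central character of $\varphi_\xi=\Ind_{\Weil_L}^{\Weil_K}\xi$ is $\det\varphi_\xi|_{K^\times}$. The standard determinant formula for an induced Galois representation gives $\det\bigl(\Ind_{\Weil_L}^{\Weil_K}\xi\bigr)=\xi|_{\Weil_K}\cdot\kappa_{L/K}$, where $\kappa_{L/K}$ is the character of $\Weil_K$ corresponding under class field theory to the sign of the regular permutation representation of $\Gal(L/K)$. With $L/K$ unramified, $\Gal(L/K)$ is cyclic of order $n$ and a generator acts as a single $n$-cycle, so $\kappa_{L/K}(\Fr)=(-1)^{n-1}$. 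This matches $\mu|_{K^\times}$ precisely, so after twisting by $\mu$ the central characters of $\pi_{\xi\mu}$ and the LLC image of $\varphi_\xi$ agree.

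The main obstacle is that matching central characters alone does not pin down the representation: one must already know that Bushnell--Henniart's $\pi_\xi$ coincides with the LLC image of $\varphi_\xi$ up to a purely central twist, so that the central character is the only remaining discrepancy. This is exactly the content of the essentially tame local Langlands correspondence proved in \cite{bushnell-henniart:10a}; once it is invoked, the elementary sign computation above completes the verification of condition (2) and finishes the proof.
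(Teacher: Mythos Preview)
Your argument has a genuine gap in the verification of condition (2), precisely at the point you flag as the ``main obstacle.'' You assert that the essentially tame local Langlands correspondence of \cite{bushnell-henniart:10a} shows $\pi_\xi$ and the LLC image of $\varphi_\xi$ differ only by a \emph{central} twist, so that matching central characters suffices. But that is not what Bushnell--Henniart prove: their theorem says the LLC image is $\pi_{\xi\cdot\mu_\xi^{\mathrm{BH}}}$ for some \emph{tamely ramified} character $\mu_\xi^{\mathrm{BH}}$ of $L^\times$, and a tame character of $L^\times$ need not factor through $\Nm_{L/K}$. Your central-character computation then only pins down $\mu_\xi^{\mathrm{BH}}|_{K^\times}$, giving $\mu_\xi^{\mathrm{BH}}(\varpi)=(-1)^{n-1}$; it says nothing about $\mu_\xi^{\mathrm{BH}}|_{\mathcal{O}_L^\times}$, which is the other half of the statement. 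In effect, to close the argument you would need to already know that $\mu_\xi^{\mathrm{BH}}$ is unramified, which is what you are trying to prove.

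The paper's proof avoids this circularity by citing Bushnell--Henniart's explicit description of the rectifier directly. One observes that a depth-zero $\xi$ has empty jump set over $K$ (since $\xi|_{1+\PL^{i+1}}$ is trivial for all $i\ge 0$, every $L_i$ equals $K$), and then \cite[Proposition 21]{bushnell-henniart:10a} gives the formula $\mu_\xi$ unramified with $\mu_\xi(\varpi)=(-1)^{n-1}$ immediately. If you want an argument that does not quote the explicit formula, you would need an independent input --- for instance, the known compatibility of the depth-zero LLC with Deligne--Lusztig induction --- to justify that the discrepancy between $\pi_\xi$ and $\mathrm{LLC}(\varphi_\xi)$ is genuinely central; invoking only the existence theorem from \cite{bushnell-henniart:10a} does not suffice.
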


\begin{proof}
It is clear that the set of jumps of $\xi$ over $K$ is empty.
Therefore, by \cite[Proposition 21]{bushnell-henniart:10a}, we have the result.
\end{proof}

\section{Tori over $p$-adic fields} \label{section:padic_tori}

Let $T$ be a torus defined over $K$ with splitting field $L$, let $K_n$ be the maximal
unramified subextension $L/K$ and set $I = \Gal(L/K_n)$.
Let $\TT$ be the N\'eron model of $T$, a canonical model of $T$
over $\OK$ \cite[Ch. 10]{bosch-lutkebohmert-reynaud:NeronModels}.
As a consequence of the N\'eron mapping
property, we may identify $\TT(\OK)$ with $T(K)$.  The connected
component of the identity, $\TT^\circ$, cuts out a subgroup
$T(K)_0 = \TT^\circ(\OK)$ of $T(K)$; we also write $T(K_n)_0$ for
$\TT^\circ(\OKn)$.

In fact, this subgroup of $T(K)$ is the first in a decreasing filtration.
Moy and Prasad \cite{moy-prasad:96a}
define one such filtration by
embedding $T$ into an induced torus and defining the filtration of
$\Res_{L/K} \Gm$ in terms of the valuation on $L$.  Yu \cite[\S 5]{yu:03a}
describes a different filtration, agreeing with that of Moy and Prasad
in the case of tame tori but with nicer features in the presence of wild
ramification.  Let $\{\TT_r\}_{r \ge 0}$ be the integral models of $T$ defined in Yu's
minimal congruent filtration and let $\{T(K)_r\}_{r \ge 0}$ and
$\{T(K_n)_r\}_{r \ge 0}$ be the corresponding filtrations of $T(K)$ and
$T(K_n)$.

Let $\C$ be the scheme of
connected components of $\TT$,
which we may identify with the
components of $\TT \times \Spec(k)$ since $T = \TT \times \Spec(K)$
is connected.  The structure of $\C$ is described by Xarles:

\begin{proposition}[{\cite[Cor. 2.12]{xarles:93a}}]
There is an exact sequence of $\Gamma_n$-modules
$$0 \rightarrow \Hom_{\ZZ}(\HH^1(I, X^*(T)), \QQ/\ZZ) \rightarrow
\C \rightarrow \Hom_{\ZZ}(X^*(T)^I, \ZZ) \rightarrow 0.$$
\end{proposition}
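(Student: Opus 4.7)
The plan is to build the two maps in the sequence explicitly using the functorial behavior of N\'eron models under morphisms of tori, and then verify exactness.

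For the quotient map $\C \to \Hom_{\ZZ}(X^*(T)^I, \ZZ)$, I would observe that $X^*(T)^I$ is the character lattice of the largest quotient torus $T'$ of $T$ that is unramified over $K$, so $\Hom_{\ZZ}(X^*(T)^I, \ZZ) = X_*(T')$. Unramified tori have smooth N\'eron models whose component group is canonically isomorphic to the cocharacter lattice (via the valuation map on $T'(K_n)$), so the natural surjection $T \twoheadrightarrow T'$ induces, after passage to N\'eron models and then to component schemes, a $\Gamma_n$-equivariant map $\C \twoheadrightarrow \C_{T'} \cong X_*(T')$.  Surjectivity should follow from the explicit splitting of $T'$ into $\Gm$-factors over the maximal unramified extension of $K$.

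To describe the kernel, I would embed $T$ into an induced torus $S = \Res_{L/K}(T_L)$, which has connected N\'eron model (as a product of open subschemes of affine spaces arising from Weil restrictions of $\Gm$), fitting into a short exact sequence
$$0 \to T \to S \to S/T \to 0.$$
Taking N\'eron models and then the associated long exact sequence of cohomology in low degree, together with the duality between $T$ and its character lattice, one recovers the identification of the kernel with $\Hom_{\ZZ}(\HH^1(I, X^*(T)), \QQ/\ZZ)$.  Lang's Theorem \ref{thm:lang} ensures that the Tate cohomology of the reductions vanishes, so the component group over $\bar k$ is controlled purely by the cohomology of the character lattice under $I$.

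The main obstacle will be tracking the Galois-module structure carefully throughout.  The sequence is stated as one of $\Gamma_n$-modules, so one must verify that the maps just described are $\Gamma_n$-equivariant, that the action of $I$ has been absorbed entirely into the cohomological term $\Hom_{\ZZ}(\HH^1(I, X^*(T)), \QQ/\ZZ)$, and that the two descriptions of $\C$---through the quotient $T \twoheadrightarrow T'$ and through the embedding $T \hookrightarrow S$---assemble into a single exact sequence.  A diagram chase will then be needed to reconcile the Hom-groups arising from induced and coinduced modules and to check that the extension class recorded by $\C$ matches the one produced by the cohomological identification; this last compatibility, rather than the construction of the individual maps, is where I expect most of the technical work to lie.
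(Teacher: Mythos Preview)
The paper does not give its own proof of this proposition: it is stated with the citation \cite[Cor.~2.12]{xarles:93a} and no proof environment follows. There is therefore nothing in the paper to compare your sketch against; the authors simply import Xarles's result wholesale and use it to derive Corollary~\ref{cor:unram_components} and the subsequent cohomological statements.

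That said, your outline is broadly in the spirit of Xarles's argument, which does proceed by analyzing the N\'eron model via the character lattice and an induced-torus resolution. A few points to tighten if you want a self-contained proof: the claim that the N\'eron model of the induced torus $S = \Res_{L/K}(T_L)$ is connected needs care when $L/K$ is ramified (Weil restriction along a ramified extension does not in general preserve smoothness, and the N\'eron model of an induced torus can acquire components); and Lang's theorem as stated in the paper concerns $\Gamma_n$-cohomology of groups over finite fields, so it does not directly control the $I$-cohomology appearing here---you will need a separate argument (e.g.\ a d\'evissage through the wild inertia filtration or Xarles's use of rigid-analytic uniformization) to pass from the component group to $\HH^1(I, X^*(T))$. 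These are exactly the technical points Xarles handles, so if you intend to supply a proof rather than a citation, consulting his paper directly would be the most efficient route.
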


\begin{corollary}[{\cite[Thm. 1.1]{xarles:93a}}] \label{cor:unram_components}
If $T$ is unramified, then $\C \cong X_*(T)$.
\end{corollary}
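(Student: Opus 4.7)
The plan is to deduce the corollary directly from the preceding proposition of Xarles by specializing to the unramified case. Since $T$ is unramified by hypothesis, its splitting field $L$ coincides with the maximal unramified subextension $K_n$, so the inertia group $I = \Gal(L/K_n)$ is trivial.

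With $I$ trivial, the two outer terms in the exact sequence
$$0 \rightarrow \Hom_{\ZZ}(\HH^1(I, X^*(T)), \QQ/\ZZ) \rightarrow \C \rightarrow \Hom_{\ZZ}(X^*(T)^I, \ZZ) \rightarrow 0$$
simplify: the left term vanishes because $\HH^1$ of the trivial group is zero, and the right term becomes $\Hom_{\ZZ}(X^*(T), \ZZ)$ since $X^*(T)^I = X^*(T)$. The sequence therefore collapses to an isomorphism $\C \xrightarrow{\sim} \Hom_{\ZZ}(X^*(T), \ZZ)$, and the right-hand side is canonically identified with $X_*(T)$ via the perfect pairing between characters and cocharacters of a torus. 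This identification is $\Gamma_n$-equivariant, so the isomorphism $\C \cong X_*(T)$ holds as $\Gamma_n$-modules.

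There is essentially no obstacle here; the corollary is a formal consequence of the proposition once one observes that unramifiedness forces $I = 1$. The only point worth checking carefully is that the isomorphism $\Hom_{\ZZ}(X^*(T), \ZZ) \cong X_*(T)$ respects the Galois action, but this is standard and comes from the definition of cocharacters.
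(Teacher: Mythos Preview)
Your argument is correct and is exactly the intended deduction: specialize the Xarles exact sequence to trivial $I$, so the left term vanishes and the right term is $\Hom_\ZZ(X^*(T),\ZZ)\cong X_*(T)$. The paper itself gives no proof here, simply citing \cite[Thm.~1.1]{xarles:93a}; your write-up supplies the one-line verification that the corollary follows from the stated proposition.
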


Using our filtration of $T(K_n)$, we may relate the cohomology of $T(K_n)$
with that of $\C$.

\begin{proposition}\label{prop:T0_cohom_triv}
$\HT{i}(\Gamma_n, T(K_n)_0) = 0$ for all $i$.
\end{proposition}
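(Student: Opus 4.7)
The plan is to filter $T(K_n)_0$ using Yu's subgroups $T(K_n)_r$ and apply Theorem~\ref{thm:lang} to each graded piece. Let $0 = r_0 < r_1 < r_2 < \cdots$ be the jumps of the filtration. For each $j$ the short exact sequence
$$1 \to T(K_n)_{r_{j+1}} \to T(K_n)_{r_j} \to T(K_n)_{r_j}/T(K_n)_{r_{j+1}} \to 1$$
yields a long exact sequence in Tate cohomology, which will propagate vanishing from the graded pieces to $T(K_n)_0$ in finite stages.

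First I would identify each graded piece as the $k_n$-points of a commutative connected algebraic group over $k$ that splits over $k_n$. The depth-zero piece $T(K_n)_0/T(K_n)_{r_1}$ is $H(k_n)$, where $H = \TT^\circ \times_{\OK} k$ is the identity component of the N\'eron special fiber. This $H$ is connected and commutative; its toric quotient has character lattice $X^*(T)^I$ with $\Gal(\bar k/k)$ acting via its quotient $\Gamma_n$, so this quotient splits over $k_n$, while its smooth connected unipotent part splits automatically over the perfect field $k$. Hence $H$ splits over $k_n$. For $j \geq 1$ the quotient $T(K_n)_{r_j}/T(K_n)_{r_{j+1}}$ is a finite-dimensional $k_n$-vector space, i.e.\ the $k_n$-points of a vector group over $k$, which trivially splits over $k_n$.

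Having verified the hypotheses of Theorem~\ref{thm:lang}, I obtain $\HT{i}(\Gamma_n, T(K_n)_{r_j}/T(K_n)_{r_{j+1}}) = 0$ for all $i$ and $j$. Induction on $m$ via the long exact sequences above then yields $\HT{i}(\Gamma_n, T(K_n)_0/T(K_n)_{r_m}) = 0$ for every $m \geq 1$. Finally, since $T(K_n)_0$ is profinite with $T(K_n)_0 = \invlim{m} T(K_n)_0/T(K_n)_{r_m}$, the desired vanishing follows by passing to the inverse limit.

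The main obstacle I expect is this last step: Tate cohomology does not a priori commute with inverse limits, so one must either work with the continuous variant for the compact topological $\Gamma_n$-module $T(K_n)_0$, or verify vanishing of the relevant $\invlim{}^1$ term, which should follow from Mittag-Leffler as all transition maps are surjective. A secondary technical point is confirming that the toric quotient of $H$ really has character lattice $X^*(T)^I$ with no wild phenomena in the ramified case; this is essentially the content of the preceding Proposition.
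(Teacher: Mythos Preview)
Your proposal is correct and follows essentially the same route as the paper: filter $T(K_n)_0$ by Yu's subgroups, kill the Tate cohomology of each graded piece via Theorem~\ref{thm:lang}, and pass to the limit. The paper handles your ``main obstacle'' by citing a lemma of Serre \cite[Lem.~3]{serre:LocalClassFieldThy} that directly reduces the vanishing for the inverse limit to that of the successive quotients, and it invokes \cite[Prop.~5.2]{yu:03a} for the connectedness of $T(K_n)_r/T(K_n)_{r+}$ rather than spelling out the structure of the graded pieces as you do.
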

\begin{proof}
Note that
$$T(K_n)_0 = \invlim{r} T(K_n)_0 / T(K_n)_r.$$
So by a result of Serre \cite[Lem. 3]{serre:LocalClassFieldThy}, it suffices to prove that
\\ $\HT{i}(\Gamma_n, T(K_n)_r / T(K_n)_{r+}) = 0$ for all $i$.  But $T(K_n)_r / T(K_n)_{r+}$
is connected \cite[Prop. 5.2]{yu:03a} and thus has trivial cohomology by
Theorem \ref{thm:lang}.
\end{proof}

\begin{corollary}
$\HT{i}(\Gamma_n, T(K_n)) \cong \HT{i}(\Gamma_n, \C)$.
\end{corollary}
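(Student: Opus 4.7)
The plan is to deduce the corollary from the long exact sequence in Tate cohomology associated to a short exact sequence of $\Gamma_n$-modules
$$0 \to T(K_n)_0 \to T(K_n) \to \C(k_n) \to 0,$$
combined with Proposition \ref{prop:T0_cohom_triv}. (I read the statement's $\C$ as shorthand for the $\Gamma_n$-module $\C(k_n)$, since $\C$ itself is a scheme.)

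First I would justify the short exact sequence. The injection $T(K_n)_0 \hookrightarrow T(K_n)$ is the definition $T(K_n)_0 = \TT^\circ(\OKn)$ combined with the N\'eron mapping property $T(K_n) = \TT(\OKn)$. For the surjection onto $\C(k_n)$, I would use the short exact sequence of smooth group schemes $0 \to \TT^\circ \to \TT \to \C \to 0$ over $\OKn$. Smoothness of $\TT$ makes the reduction map $\TT(\OKn) \to \TT(k_n)$ surjective, and smoothness of $\TT^\circ$ makes its kernel equal to the kernel of $\TT^\circ(\OKn) \to \TT^\circ(k_n)$; therefore $T(K_n)/T(K_n)_0 \cong \TT(k_n)/\TT^\circ(k_n)$. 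Lang's theorem (Theorem \ref{thm:lang}) applied to the connected $k_n$-group $\TT^\circ_{k_n}$ gives $\HH^1(\mathrm{Gal}(\bar k/k_n), \TT^\circ(\bar k)) = 0$, whence taking $k_n$-points of the short exact sequence of group schemes yields $\TT(k_n)/\TT^\circ(k_n) \cong \C(k_n)$. All maps are $\Gamma_n$-equivariant because they come from morphisms of $\OKn$-schemes with compatible $\Gamma_n$-action.

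Next I would write the long exact sequence in Tate cohomology:
$$\cdots \to \HT{i}(\Gamma_n, T(K_n)_0) \to \HT{i}(\Gamma_n, T(K_n)) \to \HT{i}(\Gamma_n, \C(k_n)) \to \HT{i+1}(\Gamma_n, T(K_n)_0) \to \cdots$$
Proposition \ref{prop:T0_cohom_triv} says the outer terms vanish in every degree, so the middle map is an isomorphism for all $i \in \ZZ$, which is the desired statement.

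The only mild obstacle is the surjectivity onto $\C(k_n)$, but this is standard Lang/N\'eron yoga; the rest is a formal long exact sequence argument. No step requires more than citing the preceding results of the section.
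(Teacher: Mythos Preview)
Your proposal is correct and follows essentially the same approach as the paper: apply the long exact Tate cohomology sequence to the short exact sequence arising from $0 \to \TT^\circ \to \TT \to \C \to 0$ and invoke Proposition~\ref{prop:T0_cohom_triv}. The paper's proof is a one-liner that leaves implicit exactly the passage from schemes to $\Gamma_n$-modules that you spell out (Hensel/smoothness and Lang for surjectivity onto $\C(k_n)$), so your version is a more detailed rendering of the same argument rather than a different route.
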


\begin{proof}
This follows from the long exact sequence in cohomology associated to the sequence
$$0 \rightarrow \TT^0 \rightarrow \TT \rightarrow \C \rightarrow 0.$$
\end{proof}

Suppose now that $T$ is unramified with splitting field $L = K_n$.

\begin{corollary} \label{cor:cohom_tori}
If $T$ is unramified, then $\HT{i}(\Gamma_n, T(L)) \cong \HT{i}(\Gamma_n, X_*(T))$
for all $i$.
\end{corollary}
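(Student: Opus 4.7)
The plan is to chain together the two preceding results: the corollary immediately above gives $\HT{i}(\Gamma_n, T(L)) \cong \HT{i}(\Gamma_n, \C)$, and Corollary \ref{cor:unram_components} identifies $\C$ with $X_*(T)$ when $T$ is unramified. Composing these two isomorphisms yields the claim.

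First I would invoke the previous corollary, which, applied with $L = K_n$, gives $\HT{i}(\Gamma_n, T(L)) \cong \HT{i}(\Gamma_n, \C)$ for all $i$. This step has already been done for us and requires no additional argument.

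Next I would verify that the identification $\C \cong X_*(T)$ from Corollary \ref{cor:unram_components} is $\Gamma_n$-equivariant, so that we can pass the cohomology through it. This is essentially automatic from Xarles' exact sequence: since $T$ is unramified with $L = K_n$, we have $I = \Gal(L/K_n) = 1$, so $\HH^1(I, X^*(T)) = 0$ and $X^*(T)^I = X^*(T)$, and the Xarles sequence degenerates to the $\Gamma_n$-equivariant isomorphism $\C \xrightarrow{\sim} \Hom_\ZZ(X^*(T), \ZZ) = X_*(T)$. Applying $\HT{i}(\Gamma_n, -)$ then yields $\HT{i}(\Gamma_n, \C) \cong \HT{i}(\Gamma_n, X_*(T))$, and composing with the previous step completes the proof.

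The only potential subtlety is the $\Gamma_n$-equivariance of the isomorphism coming from Xarles' result, but since the exact sequence in the Proposition is explicitly stated to be an exact sequence of $\Gamma_n$-modules, this presents no real obstacle. So the whole argument is a two-line composition of previously established isomorphisms.
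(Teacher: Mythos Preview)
Your proposal is correct and follows exactly the same approach as the paper: the paper's proof simply says that the result follows from the previous corollary together with Corollary~\ref{cor:unram_components}. Your additional remark about $\Gamma_n$-equivariance of the Xarles isomorphism just makes explicit what the paper leaves implicit.
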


\begin{proof}
This follows from the previous corollary together with Corollary \ref{cor:unram_components}.
\end{proof}

\begin{corollary}\label{cor:vanishing_H0}
If $T$ is unramified and anisotropic, then $\HT{0}(\Gamma_n, T(L)) = 0$.
\end{corollary}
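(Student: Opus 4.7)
The plan is to reduce the statement to a purely lattice-theoretic calculation using Corollary \ref{cor:cohom_tori}, and then to extract vanishing from the definition of ``anisotropic.''

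First, since $T$ is unramified with splitting field $L = K_n$, Corollary \ref{cor:cohom_tori} gives an isomorphism
$$\HT{0}(\Gamma_n, T(L)) \cong \HT{0}(\Gamma_n, X_*(T)),$$
so it suffices to show that the right-hand side vanishes.

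Next, I unpack the definition of Tate cohomology in degree $0$:
$$\HT{0}(\Gamma_n, X_*(T)) = X_*(T)^{\Gamma_n} \big/ \Nm_{\Gamma_n}\bigl(X_*(T)\bigr),$$
where $\Nm_{\Gamma_n}$ denotes the sum over the $\Gamma_n$-orbit. Because $T$ splits over $L = K_n$, the action of $\Gal(\bar{K}/K)$ on $X_*(T)$ factors through $\Gamma_n$, so $X_*(T)^{\Gamma_n}$ coincides with the lattice of $K$-rational cocharacters of $T$. The hypothesis that $T$ is anisotropic over $K$ is precisely the statement that this lattice is trivial, i.e.\ $X_*(T)^{\Gamma_n} = 0$. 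Hence the numerator in the displayed quotient vanishes, and therefore so does $\HT{0}(\Gamma_n, X_*(T))$.

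There is no serious obstacle here: the proof is essentially a matter of combining the two inputs (the cohomological isomorphism and the characterization of anisotropy as vanishing of the fixed cocharacter lattice). The only point worth care is making sure that $\Gamma_n = \Gal(L/K)$ really does see the whole Galois action on $X_*(T)$, which is guaranteed by the assumption that $T$ splits over the unramified extension $L = K_n$.
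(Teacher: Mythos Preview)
Your proof is correct and follows essentially the same approach as the paper: reduce via Corollary~\ref{cor:cohom_tori} to $\HT{0}(\Gamma_n, X_*(T))$, then observe that anisotropy of $T$ over $K$ means $X_*(T)^{\Gamma_n} = 0$, which forces the Tate cohomology group to vanish. The paper's proof is a one-line version of yours; you have simply spelled out the definition of $\HT{0}$ and the meaning of ``anisotropic'' more explicitly.
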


\begin{proof}
Since $T$ is anisotropic, $X_*(T)^{\Gamma_n} = 0$, giving $\HT{0}(\Gamma_n, T(L)) = 0$ by Corollary
\ref{cor:cohom_tori}.
\end{proof}

For unramified $T$ the jumps in the filtration on $T(K)$ and $T(L)$ occur at integers, and we write
\begin{align*}
T(\OK) &= T(K)_0, \\
T(\OL) &= T(L)_0, \\
T(\PK^r) &= T(K)_r\qquad \mbox{for $r > 0$}, \\
T(\PL^r) &= T(L)_r\qquad\,\,\mbox{for $r > 0$}.
\end{align*}

\section{Groups of type L} \label{section:groups_of_type_L}

We now review the theory of groups of type L due to Benedict
Gross.  For a torus $T$ over $K$ recall that the dual torus $\hat{T}$ is equipped with
an action of $\Gamma$.

\begin{definition}
A \emph{group of type L} is a group extension of $\Gamma$ by $\hat{T}$.
\end{definition}

For such a group $D$ we have by definition an exact sequence
$$1 \rightarrow \hat{T} \rightarrow D \rightarrow \Gamma \rightarrow 1.$$

We now describe how we can naturally attach a character of the coinvariants
$T(L)_{\Gamma}$ to a Langlands parameter
$$\varphi : \Weil_K \rightarrow D$$
with values in a group of type L.
Restricting $\varphi$ to $\Weil_L$ we get a homomorphism
$$\varphi|_{\Weil_L} : \Weil_L \rightarrow \hat{T},$$
and by the Langlands correspondence for tori a character
$\xi_{\varphi} : T(L) \rightarrow \CCx$.  Since $\varphi|_{\Weil_L}$ extends
to $\varphi$ we have that
$$\xi_{\varphi}(\sigma(t)) = \xi_{\varphi}(t)\ \mbox{for all $\sigma \in \Gamma$.}$$
Thus $\xi_{\varphi}$ is trivial on the augmentation ideal $I_{\Gamma}(T(L))$
and descends to $$\xi_{\varphi} : T(L)_\Gamma \rightarrow \CCx.$$

Invariants
and coinvariants are related by the norm map
in the Tate cohomology sequence
$$1 \rightarrow \HT{-1}(\Gamma,T(L)) \rightarrow T(L)_{\Gamma} \xrightarrow{\Nm} T(K)
  = T(L)^{\Gamma} \rightarrow \HT{0}(\Gamma,T(L)) \rightarrow 1.$$
We will assume in \S \ref{section:general_rectifiers} that $\HT{0}(\Gamma,T(L)) = 0$, in which case
$\xi_\varphi$ is a character of a cover of $T(K)$.

We will need the following structural result about Langlands
parameters mapping to groups of type L for the proof of
Proposition \ref{prop:existenceofrectifier}.  Suppose now that $L/K$ is unramified and that
$\varphi$ and $\varphi'$ are two Langlands parameters
with $\varphi'(\Fr) \varphi(\Fr)^{-1} \in \hat{T}$.
Let $\xi$ and $\xi'$ be the associated characters of $T(L)_{\Gamma}$.

\begin{lemma} \label{lem:toral_modification}
$\xi$ and $\xi'$ have the same restriction to $\HT{-1}(\Gamma, T(L))$.
\end{lemma}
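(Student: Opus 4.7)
The plan is to record the discrepancy between $\varphi$ and $\varphi'$ by a continuous 1-cocycle $\beta : \Weil_K \to \hat{T}$, and then use functoriality of the local Langlands correspondence for tori to express $\xi'/\xi$ as the pullback of a $T(K)$-character along the norm map; triviality on $\HT{-1}$ then follows immediately from the definition of $\HT{-1}$ as the kernel of the norm.

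First I would define $\beta(w) = \varphi'(w)\varphi(w)^{-1}$ for $w \in \Weil_K$. The hypothesis $\varphi'(\Fr)\varphi(\Fr)^{-1} \in \hat{T}$, combined with the fact that $\Weil_L$ maps into $\hat{T}$ under both $\varphi$ and $\varphi'$ and that $\Gamma$ is generated by the image of $\Fr$ (using that $L/K$ is unramified), ensures that $\varphi(w)$ and $\varphi'(w)$ have the same image in $\Gamma$ for every $w \in \Weil_K$, so that $\beta(w)$ lies in $\hat{T}$. A direct computation, using that conjugation by $\varphi(w_1)$ on $\hat{T}$ realizes the canonical $\Gamma$-action on $\hat{T}$ (since $\varphi$ lifts the projection $\Weil_K \to \Gamma$), shows
\[
\beta(w_1 w_2) = \beta(w_1)\cdot {}^{w_1}\beta(w_2),
\]
so $\beta$ is a continuous 1-cocycle.

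By LLC for tori applied to $T$ over $K$, the class $[\beta] \in \HH^1(\Weil_K, \hat{T})$ corresponds to a character $\chi_\beta$ of $T(K)$. The restriction of $\beta$ to $\Weil_L$ is the honest homomorphism $w \mapsto \varphi'(w)\varphi(w)^{-1}$ (the $\Weil_L$-action on $\hat{T}$ is trivial), which by LLC for tori over $L$ corresponds to $\xi'/\xi$ as a character of $T(L)$. Functoriality of LLC for tori with respect to the inclusion $\Weil_L \hookrightarrow \Weil_K$, which matches pullback along $\Nm_{L/K}$ on the character side, therefore yields the identity $\xi'/\xi = \chi_\beta \circ \Nm_{L/K}$ as characters of $T(L)$. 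Since $\HT{-1}(\Gamma, T(L)) \subset T(L)_\Gamma$ is represented by elements $t \in T(L)$ with $\Nm_{L/K}(t) = 1$, we obtain $(\xi'/\xi)(t) = \chi_\beta(1) = 1$, giving the desired equality of restrictions. The step I would most want to cite carefully is the functoriality of LLC for tori just invoked, since it is the nontrivial link between the cocycle-level restriction $\Weil_K \to \Weil_L$ and the norm-pullback of characters; it is a standard compatibility baked into the construction of the correspondence through continuous cohomology of $\hat{T}$.
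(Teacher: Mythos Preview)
Your argument is correct and in fact cleaner than the paper's. The paper proceeds by an explicit Frobenius computation: writing $\varphi(\Fr)=g$ and $\varphi'(\Fr)=tg$, it expands $(tg)^n g^{-n}=\prod_{i=0}^{n-1}\Fr^i(t)$, introduces unramified parameters $\varphi_i:\Weil_L\to\hat{T}$ with $\varphi_i(\Fr^n)=\Fr^i(t)$, invokes a lemma of DeBacker--Reeder to get $\xi_i=\xi_0\circ\Fr^i$, and then evaluates $\prod_i\xi_i(x)=\xi_0(\Nm(x))=1$ on norm-one $x$. In effect the paper exhibits, by hand, that the unramified part of $\xi'/\xi$ is the norm-pullback $\xi_0\circ\Nm$.

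Your route packages all of this into the single observation that $\beta=\varphi'\varphi^{-1}$ is a continuous $1$-cocycle of $\Weil_K$ in $\hat{T}$, and then invokes the functoriality of the local Langlands correspondence for tori (restriction along $\Weil_L\hookrightarrow\Weil_K$ matches pullback along $\Nm$) to write $\xi'/\xi=\chi_\beta\circ\Nm$ directly. Besides being shorter, this has a genuine advantage: the paper's displayed equality $\xi'(x)\xi(x)^{-1}=\prod_i\xi_i(x)$ tacitly assumes $\varphi|_{I_L}=\varphi'|_{I_L}$, which is not part of the stated hypothesis and does not hold in the application to Proposition~\ref{prop:existenceofrectifier}. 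Your cocycle $\beta$ absorbs the inertial discrepancy automatically, so no such assumption is needed. The compatibility you flag for citation (restriction on $\HH^1(\Weil_K,\hat{T})$ corresponds to norm-pullback of characters) is indeed the standard functoriality of the correspondence for tori and is the right thing to cite.
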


\begin{proof}
It suffices to prove that $\xi' \cdot \xi^{-1}$ vanishes on
$\ker(\Nm : T(L) \rightarrow T(K))$.  Define $g \in D$ and $t \in \hat{T}$ by
$\varphi(\Fr) = g$, $\varphi'(\Fr) = tg$.  Then
\begin{align*}
\varphi'(\Fr^n) \varphi(\Fr^n)^{-1} &= (tg)^n g^{-n} \\
&= \prod_{i=0}^{n-1} g^i t g^{-i} \\
&= \prod_{i=0}^{n-1} \Fr^i(t)
\end{align*}
since $g$ projects to $\Fr \in \Gamma$.  Define $\varphi_i \colon \Weil_L \rightarrow \hat{T}$
by $\varphi_i(z) = 1$ for $z \in I_L$ and
$\varphi_i(\Fr^n) = \Fr^i(t)$; let $\xi_i$ be the associated character
of $T(L)$.  By \cite[Lem. 4.3.1]{reeder-debacker:09a}, $\xi_i = \xi_0 \circ \Fr^i.$
Suppose that $x \in T(L)$ with $\Nm(x) = 1$.  Then
\begin{align*}
\xi'(x) \xi(x)^{-1} &= \prod_{i=0}^{n-1} \xi_i(x) \\
&= \xi_0 \left(\prod_{i=0}^{n-1}\Fr^i(x)\right) \\
&= 1.\\
\end{align*}
\end{proof}

We will also need the following lemma in order to define our notion of admissible pair
in \S\ref{section:general_rectifiers}.

\begin{lemma} \label{lem:weyl_groups}
Let $G$ be a connected reductive $K$-group and let $T$ be a maximal
$K$-torus of $G$.
\begin{enumerate}
\item $\Normalizer{T(K)}{G(L)} / T(L) \cong W(K)$.
\item The standard action of $\Normalizer{T(L)}{G(L)} / T(L)$ on $T(L)$ determines
actions of $\Normalizer{T(L)}{G(L)}^\Gamma / T(K)$ and $W(K)$
on $T(L)$ which factor naturally to actions on $T(L)_\Gamma$.
\end{enumerate}
\end{lemma}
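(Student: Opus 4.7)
My plan for part (1) is to identify $\Normalizer{T(K)}{G(L)}$ with the preimage in $N(L)$ of $W(K) \subseteq W(L)$ under the projection $N(L) \twoheadrightarrow W(L)$; dividing by the trivially contained subgroup $T(L)$ then yields the claimed isomorphism. The key input is that $T(K)$ is Zariski dense in $T$, a standard fact for tori over an infinite field. Density forces any $g \in G(L)$ that setwise normalizes $T(K)$ to normalize all of $T_L$, placing $g$ in $N(L)$. To refine the image to $W(K)$, I would observe that $g T(K) g^{-1} \subseteq T(K) = T(L)^{\Gamma}$ is equivalent, again via density, to the cocycle $\sigma \mapsto g^{-1} \sigma(g)$ taking values in $Z_G(T)(L) = T(L)$, and this is in turn equivalent to the Weyl class $w_g \in W(L)$ being $\Gamma$-fixed. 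Conversely, Hilbert 90 applied to $1 \to T \to N \to W \to 1$ over the splitting field $L$ lifts any $w \in W(K)$ to some $n \in N(L)$, and the same cocycle computation run in reverse shows that $n$ normalizes $T(K)$.

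For part (2), the same density argument applied to $T(L) \subseteq T_L$ gives $\Normalizer{T(L)}{G(L)} = N(L)$, with quotient $W(L)$ acting on $T(L)$ by conjugation. Taking $\Gamma$-fixed points yields $N(L)^{\Gamma} = N(K)$, so that $\Normalizer{T(L)}{G(L)}^{\Gamma}/T(K) = N(K)/T(K)$, which embeds into $W(L)^{\Gamma} = W(K)$; conjugation thus defines actions of both groups on $T(L)$. To see that these descend to $T(L)_{\Gamma}$, I would check that the $W(K)$-action commutes with the $\Gamma$-action on $T(L)$, which in turn follows from the $K$-rationality of the $W$-action on $T$: in general, $\sigma(w \cdot t) = \sigma(w) \cdot \sigma(t)$ for $w \in W(L)$, $\sigma \in \Gamma$, $t \in T(L)$, and this simplifies to $\sigma(w \cdot t) = w \cdot \sigma(t)$ when $w \in W(K)$. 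An action commuting with $\Gamma$ preserves the augmentation subgroup generated by the $\sigma(t) t^{-1}$, and so descends to $T(L)_{\Gamma}$.

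The main obstacle is part (1) — specifically, translating the set-theoretic condition ``$g$ normalizes $T(K)$'' into the cohomological condition ``the Weyl class of $g$ is $\Gamma$-fixed.'' The Zariski density of $T(K)$ in $T$ is the essential input, used together with Hilbert 90 for $1 \to T \to N \to W \to 1$ over the splitting field $L$. Part (2) is then largely formal, amounting to the observation that the $W$-action on $T$ is $K$-rational.
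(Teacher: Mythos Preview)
Your argument is correct. The Zariski density of $T(K)$ in $T$ (valid since $K$ is infinite) is indeed the key input for part (1): it forces $N_{G(L)}(T(K)) \subseteq N(L)$, and then the cocycle computation you describe identifies the image in $W(L)$ as exactly $W(K)$, with surjectivity coming from the vanishing of $H^1(L,T)$ for the split torus. Part (2) is, as you say, a formal consequence of the $K$-rationality of the $W$-action on $T$.

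There is nothing to compare with: the paper does not give its own proof of this lemma, but simply refers the reader to \cite[Lem.~9.1]{adrian-lansky:ppa}. Your write-up supplies what the paper omits, and the approach you take is the standard one.
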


\begin{proof}
See \cite[Lem. 9.1]{adrian-lansky:ppa}.
\end{proof}

\section{The relationship between the Gross construction and the DeBacker--Reeder and Reeder construction}
\label{section:gross_debacker_reeder}

Let $\varphi : \Weil_K \rightarrow {}^L G$ be a regular semisimple elliptic Langlands
parameter for an unramified connected reductive group $G$
(see \cite{reeder-debacker:09a} and \cite{reeder:08a}).
Here, ${}^L G = \langle \hat{\theta} \rangle \ltimes \hat{G}$,
where $\hat{\theta}$ is the dual Frobenius automorphism on $\hat{G}$
(see \cite[\S 3]{reeder-debacker:09a}).
Note that $\varphi$ has image in a group of
type L.  Let $L,K,T,\hat{T}, \Gamma$ and $\xi_{\varphi}$ be as in
\S\ref{section:groups_of_type_L} and recall that we have assumed that $L/K$ is unramified.
Then $\varphi(I_K) \subset \hat{T}$ and
$\varphi(\Fr) = \hat{\theta} f$ for some $f \in \hat{N}$.  Let $\hat{w}$
be the image of $f$ in $\hat{W}$.
DeBacker--Reeder \cite{reeder-debacker:09a} and Reeder \cite{reeder:08a}
associate a character $\chi_{\varphi}$ of $T(K)$ to $\varphi$.

We now recall the definition of
the Tits group and some of its properties.  Choose a set $\{ X_{\alpha} \}$ of root vectors
indexed by the set of simple roots of $\hat{T}$ in $\hat{B}$; $(\hat{T}, \hat{B}, \{X_{\alpha} \})$
is a pinning as in \cite[\S 3.1]{reeder:09a}.
For each simple root $\alpha$, define $\phi_{\alpha} : \SL_2 \rightarrow \hat{G}$
by
\begin{align*}
\phi_{\alpha}\begin{pmatrix}z & 0 \\ 0 & z^{-1}\end{pmatrix} &= \alpha^{\vee}(z) \\
d \phi_{\alpha}\begin{pmatrix}0 & 1 \\ 0 & 0\end{pmatrix} &= X_{\alpha}.
\end{align*}
Let $\sigma_{\alpha} = \phi_{\alpha}\begin{pmatrix}0 & 1 \\ -1 & 0\end{pmatrix}$.  

\begin{definition}
  The Tits group $\widetilde{W}$ is the subgroup of $\hat{N}$
  generated by $\{\sigma_{\alpha} \}$ for simple roots $\alpha$.
\end{definition}

For each simple root $\alpha$, let $m_{\alpha} = \sigma_{\alpha}^2 = \alpha^{\vee}(-1)$ and
let $\hat{T}_2$ be the subgroup of $\hat{T}$ generated by the $m_{\alpha}$.

\begin{theorem}{(\cite{tits:66a})}
\begin{enumerate}

\item The kernel of the natural map $\widetilde{W} \rightarrow \hat{W}$
  is $\hat{T}_2$,
\item The elements $\sigma_{\alpha}$ satisfy the braid relations,
\item There is a canonical lifting of $\hat{W}$ to a subset of
  $\widetilde{W}$: take a reduced expression $w = s_{\alpha_1} \cdots s_{\alpha_n}$,
  and let $\tilde{w} = \sigma_{\alpha_1} ... \sigma_{\alpha_n}$.
\end{enumerate}
\end{theorem}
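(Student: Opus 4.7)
The plan is to prove the three parts in sequence, following the approach in Tits's original paper. First, the map $\widetilde{W} \to \hat{W}$ sends $\sigma_\alpha$ to the simple reflection $s_\alpha$. Since $\hat{W}$ is generated by the $s_\alpha$, this map is surjective. The containment $\hat{T}_2 \subset \ker$ is immediate from $\sigma_\alpha^2 = \alpha^\vee(-1) = m_\alpha$, so each $m_\alpha$ projects to $s_\alpha^2 = 1$. The substantive content of (1) is the reverse inclusion: any word in the $\sigma_\alpha$ that projects trivially to $\hat{W}$ lies in $\hat{T}_2$. I would establish this by producing a normal form for elements of $\widetilde{W}$, writing any element as $t \tilde{w}$ with $t \in \hat{T}_2$ and $\tilde{w}$ the canonical lift of part (3) of some $w \in \hat{W}$, and then observing that the projection $\widetilde{W}/\hat{T}_2 \to \hat{W}$ is visibly injective on such normal forms. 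Strictly speaking this uses (3), so the cleanest order is to prove (2), then (3), then deduce (1) from the normal form.

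For (2), the braid relation to verify between $\sigma_\alpha$ and $\sigma_\beta$ involves only the rank-two root subsystem generated by $\{\alpha,\beta\}$, because $\sigma_\alpha$ and $\sigma_\beta$ lie in the simply connected subgroup of $\hat{G}$ generated by the images of $\phi_\alpha$ and $\phi_\beta$. I would reduce to the four rank-two cases $A_1\times A_1$, $A_2$, $B_2$, $G_2$. In each case, using the explicit description $\sigma_\alpha = \phi_\alpha\bigl(\begin{smallmatrix}0&1\\-1&0\end{smallmatrix}\bigr)$ and the commutation formulas between one-parameter root subgroups, one verifies the relation $\underbrace{\sigma_\alpha \sigma_\beta \sigma_\alpha \cdots}_{m_{\alpha\beta}} = \underbrace{\sigma_\beta \sigma_\alpha \sigma_\beta \cdots}_{m_{\alpha\beta}}$ by direct computation. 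The key point that makes the signs work out is that $\phi_\alpha$ is defined with respect to the pinning $(\hat{T}, \hat{B}, \{X_\alpha\})$, so the structure constants entering the computation are determined by this pinning.

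For (3), given (2), the well-definedness of $\tilde{w} = \sigma_{\alpha_1}\cdots\sigma_{\alpha_n}$ follows from the theorem of Matsumoto and Tits: any two reduced expressions for the same element $w \in \hat{W}$ can be connected by a sequence of braid moves, without invoking the involution relations $s_\alpha^2 = 1$. Since the $\sigma_\alpha$ satisfy exactly the braid relations from (2), the product is unchanged under such moves, so $\tilde{w}$ depends only on $w$. Finally, with (2) and (3) in hand, the normal form argument for (1) proceeds: every word in the $\sigma_\alpha$ can be reduced, using braid relations and substitutions $\sigma_\alpha^2 = m_\alpha$ (and the easily verified fact $\sigma_\alpha t \sigma_\alpha^{-1} = s_\alpha(t) \cdot (\text{element of } \hat{T}_2)$ for $t \in \hat{T}_2$), to a unique expression $t\,\tilde{w}$ with $t\in\hat{T}_2$, from which the kernel computation is immediate.

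The main obstacle will be the rank-two verification in part (2), particularly the $G_2$ case, where the chain of commutator identities is lengthy and the choice of signs in the pinning must be tracked carefully. The $A_1 \times A_1$ case is trivial (the two $\SL_2$'s commute), $A_2$ follows from a short $\SL_3$ calculation, and $B_2$ and $G_2$ are progressively more involved; past this computational hurdle, parts (1) and (3) are essentially formal consequences.
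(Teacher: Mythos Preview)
The paper does not prove this theorem at all: it is stated with the attribution \cite{tits:66a} and used as a black box, with no proof or proof sketch provided. So there is nothing in the paper to compare your proposal against.

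That said, your outline is a reasonable and essentially correct sketch of the standard proof. A couple of minor corrections: for $t \in \hat{T}_2$ one has exactly $\sigma_\alpha t \sigma_\alpha^{-1} = s_\alpha(t)$, since conjugation by any lift of $s_\alpha$ in $\hat{N}$ acts on $\hat{T}$ as $s_\alpha$; the extra ``element of $\hat{T}_2$'' you mention is not needed here (though one does need to check that $s_\alpha(\hat{T}_2) \subset \hat{T}_2$, which follows from the formula $s_\beta(\alpha^\vee)(-1) = \alpha^\vee(-1)\,\beta^\vee(-1)^{\langle \alpha^\vee,\beta\rangle}$). Also, your normal form argument in (1) shows that $\widetilde{W}/\hat{T}_2 \to \hat{W}$ is injective, which is exactly the kernel statement; but to get \emph{uniqueness} of the normal form $t\tilde{w}$ you are implicitly using that $\tilde{w} \ne \tilde{w}'$ in $\widetilde{W}$ whenever $w \ne w'$, which is immediate from the projection, and that $\hat{T}_2 \cap \{\tilde{w} : w \in \hat{W}\}$ reduces to the identity---this last point is what actually requires care, and is where Tits's argument does some work. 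Your case-by-case verification of (2) and the appeal to Matsumoto's theorem for (3) are the standard route.
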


We remark that the lifting $\hat{W} \rightarrow \widetilde{W}$ is not necessarily a homomorphism,
as shown by the example of $\SL_2$.

\begin{definition} \label{def:phiu}
Given $\hat{u} \in \hat{W}$, let $\tilde{u}$ be its canonical lift to $\widetilde{W}$.
We define a homomorphism $\varphi_{\hat{u}} : \Weil_K \rightarrow {}^L G$ by
\begin{enumerate}
\item $\varphi_{\hat{u}}|_{I_K} \equiv 1$,
\item $\varphi_{\hat{u}}(\Fr) = \hat{\theta} \tilde{u}$.
\end{enumerate}
\end{definition}

By
\S\ref{section:groups_of_type_L}, $\varphi$ and $\varphi_{\hat{w}}$ give rise to characters
$\xi_{\varphi}$ and $\xi_{\varphi_{\hat{w}}}$ of $T(L)_{\Gamma}$ respectively.

\begin{lemma} \label{lem:GDR_compat}
$\xi_{\varphi}$ and $\chi_{\varphi} \circ \Nm$ have the same restriction to $T(\OL)_{\Gamma}$.
\end{lemma}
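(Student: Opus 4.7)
The plan is to reduce the claimed equality of characters on $T(\OL)_\Gamma$ to an equality on $T(\OK)$ via a norm isomorphism, and then to verify the latter by identifying both sides with the image of $\varphi|_{I_K}$ under the local Langlands correspondence for tori.

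First I would apply Proposition \ref{prop:T0_cohom_triv} to $T(\OL) = T(L)_0$: since $\HT{i}(\Gamma, T(\OL)) = 0$ for all $i$, the Tate cohomology sequence
\[
1 \to \HT{-1}(\Gamma, T(\OL)) \to T(\OL)_\Gamma \xrightarrow{\Nm} T(\OK) \to \HT{0}(\Gamma, T(\OL)) \to 1
\]
degenerates and the norm map $T(\OL)_\Gamma \to T(\OK)$ is an isomorphism. Consequently it suffices to check that the character of $T(\OK)$ corresponding to $\xi_\varphi|_{T(\OL)_\Gamma}$ under this isomorphism equals $\chi_\varphi|_{T(\OK)}$.

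Next I would unpack both characters in terms of $\varphi|_{I_K}$. By its definition in \S\ref{section:groups_of_type_L}, $\xi_\varphi$ is the image of $\varphi|_{\Weil_L}$ under local Langlands for the $L$-split torus $T$, so its restriction to $T(\OL)$ is cut out by $\varphi|_{I_L}$, which equals $\varphi|_{I_K}$ since $L/K$ is unramified. On the other side, in the DeBacker--Reeder/Reeder construction the canonical Tits lift $\tilde w$ is used to peel the Weyl contribution off $\varphi(\Fr)$, leaving a $\hat T$-valued cocycle whose restriction to inertia is again $\varphi|_{I_K}$, and $\chi_\varphi$ is the image of this cocycle under local Langlands for the $K$-torus $T$; thus $\chi_\varphi|_{T(\OK)}$ is also determined by $\varphi|_{I_K}$. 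The final step is to invoke the standard compatibility of local Langlands for tori with norms: if $\chi$ on $T(K)$ corresponds to a parameter $\rho : \Weil_K \to \hat T \rtimes \Gamma$, then $\chi \circ \Nm_{L/K}$ on $T(L)$ corresponds to $\rho|_{\Weil_L}$. Applied to the inertial restrictions, this yields $\chi_\varphi|_{T(\OK)} \circ \Nm = \xi_\varphi|_{T(\OL)}$, and the lemma follows on passing to coinvariants.

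The main obstacle will be making the middle step fully precise: one must check carefully that the DeBacker--Reeder recipe for $\chi_\varphi$ on $T(\OK)$ is compatible, via the inverse of the norm isomorphism, with the Gross-style construction of $\xi_\varphi$ on $T(\OL)_\Gamma$. This amounts to verifying that the twists introduced by the choice of pinning and the Tits lift $\tilde w$ do not contribute at the pro-unit level, where the Frobenius data of $\varphi$ are irrelevant. Once this identification is in hand, the norm-compatibility of local Langlands for tori is standard and completes the argument.
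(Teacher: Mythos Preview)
Your proposal is correct and follows essentially the same approach as the paper: use Proposition~\ref{prop:T0_cohom_triv} to show that $\Nm : T(\OL)_\Gamma \to T(\OK)$ is an isomorphism, and then identify $\xi_\varphi|_{T(\OL)_\Gamma}$ with $\chi_\varphi|_{T(\OK)}$ through this isomorphism. The paper's proof is terser at the final step, simply asserting that this identification is precisely the definition of $\chi_\varphi|_{T(\OK)}$ in \cite{reeder-debacker:09a} and \cite{reeder:08a}, whereas you unpack this via inertial restrictions and norm-compatibility of the local Langlands correspondence for tori; your version is a legitimate expansion of the same point.
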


\begin{proof}
We have the exact sequence
$$1 \rightarrow \HT{-1}(\Gamma, T(L)) \rightarrow T(L)_{\Gamma} \rightarrow T(K)
  \rightarrow \HT{0}(\Gamma, T(L)) \rightarrow 1.$$
Recall that the character $\xi_{\varphi}$ is associated to $\varphi$ by
the local Langlands correspondence for tori (see \S\ref{section:groups_of_type_L}).
Note that the above exact sequence restricts to an exact sequence
$$1 \rightarrow \HT{-1}(\Gamma, T(\OL)) \rightarrow T(\OL)_{\Gamma}
  \rightarrow T(\OK) \rightarrow \HT{0}(\Gamma, T(\OL)) \rightarrow 1.$$
Moreover, by Proposition \ref{prop:T0_cohom_triv}, we have
$\HT{-1}(\Gamma, T(\OL)) = \HT{0}(\Gamma, T(\OL)) = 1$.
Therefore, the map
$$T(\OL)_{\Gamma} \xrightarrow{\Nm} T(\OK)$$
is an isomorphism, so
$\xi_{\varphi}|_{T(\OL)_{\Gamma}}$ factors to a character of
$T(\OK)$ via this isomorphism.  But this is exactly how the character
$\chi_{\varphi}|_{T(\OK)}$ is constructed in \cite{reeder-debacker:09a} and \cite{reeder:08a}.
\end{proof}

The following proposition relates the character $\xi_\varphi$ defined through groups of type $L$ to the character $\chi_\varphi$ constructed by DeBacker-Reeder and Reeder.

\begin{proposition}\label{prop:existenceofrectifier}
If $G$ is semisimple, then $\chi_{\varphi} \circ \Nm = \xi_{\varphi} \otimes \xi_{\varphi_{\hat{w}}}^{-1}$.
\end{proposition}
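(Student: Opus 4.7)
The plan is to exploit the Tate cohomology exact sequence
\[
1 \to \HT{-1}(\Gamma, T(L)) \to T(L)_\Gamma \xrightarrow{\Nm} T(K) \to \HT{0}(\Gamma, T(L)) \to 1.
\]
Because $G$ is semisimple and $\varphi$ is elliptic, $T$ will be anisotropic, and Corollary \ref{cor:vanishing_H0} then forces $\HT{0}(\Gamma, T(L)) = 0$, so $\Nm$ is surjective with kernel $\HT{-1}(\Gamma, T(L))$. I will check the claimed equality of characters on $T(L)_\Gamma$ by verifying it separately on $\HT{-1}(\Gamma, T(L))$ and on $T(\OL)_\Gamma$; these two subgroups together generate $T(L)_\Gamma$ once I know $T(K) = T(\OK)$.

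First, on $\HT{-1}(\Gamma, T(L))$ the character $\chi_\varphi \circ \Nm$ is trivial. For the right-hand side I would apply Lemma \ref{lem:toral_modification} to the pair $(\varphi, \varphi_{\hat w})$. Since $\varphi(\Fr) = \hat\theta f$ and $\varphi_{\hat w}(\Fr) = \hat\theta\tilde w$ with $f, \tilde w \in \hat N$ both projecting to $\hat w \in \hat W$, the product $\tilde w f^{-1}$ lies in $\hat T$, and because $\hat\theta$ normalizes $\hat T$ we get
\[
\varphi_{\hat w}(\Fr)\varphi(\Fr)^{-1} = \hat\theta(\tilde w f^{-1})\hat\theta^{-1} \in \hat T,
\]
so the hypothesis of the lemma is met. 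The lemma then delivers $\xi_\varphi|_{\HT{-1}} = \xi_{\varphi_{\hat w}}|_{\HT{-1}}$, which says that $(\xi_\varphi \otimes \xi_{\varphi_{\hat w}}^{-1})|_{\HT{-1}}$ is trivial.

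Next, on $T(\OL)_\Gamma$ I would use the isomorphism $\Nm \colon T(\OL)_\Gamma \xrightarrow{\sim} T(\OK)$ established inside the proof of Lemma \ref{lem:GDR_compat}. That lemma, applied to both $\varphi$ and $\varphi_{\hat w}$, identifies $\xi_\varphi|_{T(\OL)_\Gamma}$ with $\chi_\varphi|_{T(\OK)}$ and $\xi_{\varphi_{\hat w}}|_{T(\OL)_\Gamma}$ with $\chi_{\varphi_{\hat w}}|_{T(\OK)}$. The parameter $\varphi_{\hat w}$ is by construction trivial on $I_K$, and the restriction of a DeBacker--Reeder character to $T(\OK)$ is produced from the inertia part of the parameter through the Langlands correspondence for tori; hence $\chi_{\varphi_{\hat w}}|_{T(\OK)}$ will be trivial. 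Consequently $\xi_{\varphi_{\hat w}}|_{T(\OL)_\Gamma} = 1$, and the identity on $T(\OL)_\Gamma$ reduces to the content of Lemma \ref{lem:GDR_compat} applied to $\varphi$.

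Finally I would verify $T(K) = T(\OK)$: Corollary \ref{cor:unram_components} identifies the special fibre of the component scheme of the N\'eron model with $X_*(T)$ as a $\Gamma_n$-module, and anisotropy gives $X_*(T)^{\Gamma_n} = 0$, so the component group of rational points is trivial and $T(K)/T(\OK) = 0$. Together with the two previous steps this forces the two characters to coincide on all of $T(L)_\Gamma$. The main obstacle I expect is the input that $\chi_{\varphi_{\hat w}}|_{T(\OK)}$ vanishes: while this is morally clear because $\varphi_{\hat w}$ is unramified, it does require unwinding the DeBacker--Reeder recipe in order to confirm that the Frobenius contribution $\hat\theta\tilde w$ does not leak into the tame part of their character.
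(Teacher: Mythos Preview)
Your argument follows the same overall architecture as the paper's proof: use semisimplicity to get $\HT{0}(\Gamma,T(L))=0$ and $T(K)=T(\OK)$, observe that $\HT{-1}(\Gamma,T(L))$ and $T(\OL)_\Gamma$ generate $T(L)_\Gamma$, and then verify the identity on each piece via Lemma~\ref{lem:toral_modification} and Lemma~\ref{lem:GDR_compat} respectively.

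The one place you diverge is in handling $\xi_{\varphi_{\hat w}}$ on $T(\OL)_\Gamma$. You route this through a second application of Lemma~\ref{lem:GDR_compat}, writing $\xi_{\varphi_{\hat w}}|_{T(\OL)_\Gamma} = \chi_{\varphi_{\hat w}}\circ\Nm$ and then arguing that $\chi_{\varphi_{\hat w}}|_{T(\OK)}$ vanishes. This detour is exactly the obstacle you flag at the end: $\varphi_{\hat w}$ is trivial on inertia and hence is not a regular semisimple parameter, so neither the DeBacker--Reeder construction of $\chi_{\varphi_{\hat w}}$ nor Lemma~\ref{lem:GDR_compat} is available for it as stated. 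The paper sidesteps this entirely by arguing directly that $\varphi_{\hat w}|_{I_K}\equiv 1$ forces $\xi_{\varphi_{\hat w}}$ (which is built from $\varphi_{\hat w}|_{\Weil_L}$ via the Langlands correspondence for tori) to be unramified, hence trivial on $T(\OL)_\Gamma$. That is the ``morally clear'' statement you mention, and it is all that is needed---no DeBacker--Reeder input for $\varphi_{\hat w}$ is required. Replace your detour with this one-line observation and the proof is complete and matches the paper's.
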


\begin{proof}
Since $G$ is semisimple, $T(K)$ is compact.  In particular,
$\HT{0}(\Gamma, T(L)) = 0$ by Corollary \ref{cor:vanishing_H0},
so we have the following exact sequence:
$$1 \rightarrow \HT{-1}(\Gamma, T(L)) \rightarrow T(L)_{\Gamma} \rightarrow T(K) \rightarrow 1.$$
Note that $T(K) = T(\OK)$ and thus
$T(\OL)_{\Gamma}$ surjects onto $T(K)$ via the norm map
$\Nm$.  Therefore $\HT{-1}(\Gamma,T(L))$ and
$T(\OL)_{\Gamma}$ together generate $T(L)_{\Gamma}$.  It thus suffices to check that
$\xi_{\varphi} \otimes \xi_{\varphi_{\hat{w}}}^{-1} = \chi_{\varphi} \circ \Nm$
on each of these two subgroups.

Since $\varphi_{\hat{w}}|_{I_K} \equiv 1$, $\xi_{\varphi_{\hat{w}}}$ is trivial on
$T(\OL)_{\Gamma}$ so Lemma
\ref{lem:GDR_compat} implies equality on $T(\OL)_{\Gamma}$.
Equality on $\HT{-1}(\Gamma,T(L))$ is Lemma \ref{lem:toral_modification}.
\end{proof}

We note that for semisimple $G$ we may replace $\tilde{w}$ by another
lift $w'$ of $\hat{w}$ to $\hat{N}$ in the definition of $\varphi_{\hat{w}}$.
In fact, if we define $\varphi'$ by
\begin{align*}
\varphi'|_{I_K} &\equiv 1 \\
\varphi'(\Fr) &= w'
\end{align*}
then Lemma \ref{lem:toral_modification} implies $\xi_{\varphi_{\hat{w}}} = \xi_{\varphi'}$.
We will justify the Tits group lift $\tilde{w}$ in \S\ref{section:BH_compat} for $\GL_n(K)$.

\section{$L$-packets fixed under translation by a character}\label{Q_T}

The general definition of rectifier is complicated by the fact that different
characters of a torus can yield the same $L$-packet.  Consider the following archetypical example.
Let $K = \QQ_3$, $G = \SL_2$ and $T$ be an unramified anisotropic torus in $G$.  There are four depth zero
characters: two admissible and two inadmissible, notions defined below.  Since the two admissible characters are interchanged
by the action of the Weyl group, the corresponding $L$-packets are isomorphic \cite[\S10]{murnaghan:11}.
In this section we investigate depth zero characters of $T(K)$ that leave the association $\chi \mapsto \Lpack(\chi)$ invariant upon translation:
$$\Lpack(\chi) = \Lpack(\alpha\cdot\chi) \mbox{ for all depth zero admissible $\chi$}.$$

\begin{definition} \label{def:admissible}
Let $T$ be a $K$-minisotropic torus, that splits over an unramified
extension $L$ (see \cite[\S3]{reeder:08a}).  Suppose $\xi$ is a character of $T(L)_{\Gamma}$.
\begin{enumerate}
\item The pair $(T, \xi)$ is called \emph{admissible} if $\xi$ is not fixed
by any nontrivial element of $W(K)$ (c.f. Lemma \ref{lem:weyl_groups}); we
denote by $P_G(K)$ the set of admissible pairs in $G$.
\item We call two admissible pairs $(T, \xi)$ and $(T', \xi')$ \emph{isomorphic} if there
exists a $g \in G(K)$ such that $gT(K)g^{-1} = T'(K)$ and $\xi(t) = \xi'(gtg^{-1})$
for all $t \in T(K)$.
\end{enumerate}
Similarly, we will call a character of $T(K)$ \emph{admissible} if
it is not fixed by any nontrivial element of $W(K)$
(c.f. \cite[p. 802]{reeder-debacker:09a} and \cite[\S3]{reeder:08a})
\end{definition}

Note that this definition of admissible pair generalizes
Bushnell-Henniart's notion of admissible pair \cite{bushnell-henniart:10a} in
the case of unramified tori.  Indeed,
if $G = \GL_n$, and $T$ is an elliptic torus in $G$ splitting over
an unramified extension $L/K$, then one can show that
$W(K) = \Gamma$.  In this case, the following are equivalent conditions
on a character $\xi$ of $T(K) = \Lx$:
\begin{enumerate}
\item $\xi$ is fixed by a nontrivial element of $W(K)$,
\item $\xi$ is fixed by a nontrivial subgroup of $\Gamma$,
\item $\xi$ factors through the norm map $\Nm_{L/M}$ for some intermediate field $K \subseteq M \subset L$.
\end{enumerate}

Note that for non-adjoint groups it is not sufficient to consider only reflections.
For example, the depth zero character of the split torus in $\SL_3(\QQ_7)$ inflated from
$$\begin{pmatrix} 3^x & & \\ & 3^y & \\ & & 3^{-x-y} \end{pmatrix} \mapsto \zeta_6^{2x + 4y}$$
is fixed by a 3-cycle in the Weyl group and thus not admissible.

In the next section we will be particularly interested in depth zero characters; write $\hatT$ for the set of
depth zero characters of $T(\OK)$, $\Thadm$ for the admissible
ones and $\Thinadm$ for the inadmissible ones.  Each of these
sets is finite since they may be identified with characters of $T(k)$.

\begin{definition}
Write $Q_T$ for the set of $\alpha \in \hatT$ with the following property:
\begin{itemize}
\item For every $\chi \in \Thadm$ there is a $w \in W(K)$ with $\alpha = \frac{\chi}{w(\chi)}$.
\end{itemize}
\end{definition}

The $\SL_2(\QQ_3)$ example above has $Q_T$ of order two, but $Q_T$ is trivial for most tori.
We spend the rest of this section giving criteria constraining $Q_T$.

\begin{proposition} \label{irr-sub}
The set $Q_T$ is a subgroup of $\hatT$, contained within $\Thinadm$ and stable under the action of $W(K)$.
\end{proposition}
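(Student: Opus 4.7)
The plan is to establish the three assertions in order: subgroup, then $W(K)$-stability, then containment in $\Thinadm$, since the last step uses the first two.

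For the subgroup claim, I would first note $1 \in Q_T$ via $w = 1$. For closure under products, given $\alpha,\beta \in Q_T$ and an admissible $\chi$, apply the defining property of $\alpha$ to find $w_1 \in W(K)$ with $w_1(\chi) = \alpha^{-1}\chi$; the translate $\chi' := w_1(\chi)$ is again admissible (admissibility being Weyl-stable), so applying the property of $\beta$ to $\chi'$ produces $w_2$ with $w_2(\chi') = \beta^{-1}\chi'$, and chaining gives $(w_2 w_1)(\chi) = (\alpha\beta)^{-1}\chi$. Closure under inverses then follows from finiteness of $\hatT$. For $W(K)$-stability, given $\alpha \in Q_T$, $v \in W(K)$ and admissible $\chi$, apply the property of $\alpha$ to the admissible character $v^{-1}(\chi)$ to obtain $w'$ with $\alpha = v^{-1}(\chi)/w'(v^{-1}(\chi))$; applying $v$ to both sides yields $v(\alpha) = \chi/(vw'v^{-1})(\chi)$, so $v(\alpha) \in Q_T$.

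The hard part is showing $Q_T \subseteq \Thinadm$, for which I would use a cardinality argument. For each admissible $\chi$, the map $W(K) \to \hatT$ sending $w \mapsto \chi/w(\chi)$ is injective (its fibers are cosets of the Weyl stabilizer of $\chi$, which is trivial by admissibility), and its image contains $Q_T$ by definition, so $|Q_T| \leq |W(K)|$. If some $\alpha \in Q_T$ were admissible, then by $W(K)$-stability the orbit $W(K)\cdot\alpha \subseteq Q_T$ has cardinality exactly $|W(K)|$, forcing $Q_T = W(K)\cdot\alpha$. But $1 \in Q_T$ and the trivial character is fixed by all of $W(K)$, so $1 \in W(K)\cdot\alpha$ forces $\alpha = 1$; this contradicts admissibility of $\alpha$, since the trivial character is fixed by every element of the (nontrivial) group $W(K)$. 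The main obstacle is this last step, where combining the cardinality bound, $W(K)$-stability, and $1 \in Q_T$ produces the contradiction; parts (1) and (3) are straightforward manipulations with the definitions.
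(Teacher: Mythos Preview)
Your subgroup and $W(K)$-stability arguments match the paper's proof almost verbatim. The difference lies in the containment $Q_T \subseteq \Thinadm$, which you call ``the hard part'' but which the paper dispatches in one line. Your cardinality route is correct: the injectivity of $w \mapsto \chi/w(\chi)$ for admissible $\chi$ gives $|Q_T| \le |W(K)|$ (using any admissible $\chi$, in particular $\alpha$ itself), and combining this with the free $W(K)$-orbit of an admissible $\alpha$ inside $Q_T$ forces $Q_T = W(K)\cdot\alpha$, whence $1 = w(\alpha)$ for some $w$ and $\alpha = 1$. But the paper simply applies the defining property of $Q_T$ with $\chi = \alpha$: since $\alpha$ is admissible, there is $w \in W(K)$ with $\alpha = \alpha/w(\alpha)$, so $w(\alpha) = 1$ and hence $\alpha = 1$, contradicting admissibility. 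This is the same endgame you reach, without the detour through orbit sizes. Your argument does yield the extra information $|Q_T| \le |W(K)|$, which is not stated in the paper but is a pleasant byproduct; the paper's version, by contrast, needs neither the subgroup property nor $W(K)$-stability, so it can be (and is) proved first.
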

\begin{proof}
If $\alpha \in \Thadm \cap Q_T$ then there is some $w \in W(K)$ with
$\frac{\alpha}{w(\alpha)} = \alpha$, so $\alpha = 1$ which is not admissible.

We now show that $Q_T$ is a group.  Certainly $1 \in Q_T$.  Suppose
$\alpha, \alpha' \in Q_T$ and $\chi \in \Thadm$.  Then there are $w, w' \in W(K)$ with
\begin{align*}
\frac{\chi}{w(\chi)} &= \alpha, \\
\frac{w(\chi)}{w'(w(\chi))} &= \alpha'.
\end{align*}
Multiplying the two relations yields $\frac{\chi}{w'w(\chi)} = \alpha\alpha'$, so
$\alpha\alpha' \in Q_T$.  We finish by noting that $Q_T$ is finite and thus closure under
multiplication implies closure under inversion.

Finally, suppose $\tau \in W(K)$.  Given $\chi \in \Thadm$ with $\alpha = \frac{\chi}{w(\chi)}$ we have
$$\tau(\alpha) = \frac{\tau(\chi)}{\tau w(\chi)} = \frac{\tau(\chi)}{w' \tau(\chi)}$$
for some $w' \in W(K)$.  Since $\tau$ permutes the admissible characters we get that $\tau(\alpha) \in Q_T$.
\end{proof}

The condition on $\alpha \in Q_T$ is an extremely stringent one, and an abundance of admissible
characters will preclude a nontrivial $\alpha$.  We can make this statement precise:

\begin{proposition} \label{pigeonhole}
Suppose $\#\Thadm > (\# W(K) - 1) \cdot \# \Thinadm$.  Then $Q_T = \{ 1 \}$.
\end{proposition}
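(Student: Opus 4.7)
The plan is a pigeonhole argument by contradiction. Suppose there exists $\alpha \in Q_T$ with $\alpha \neq 1$. By the definition of $Q_T$, for every $\chi \in \Thadm$ there is some $w \in W(K)$ with $\chi/w(\chi) = \alpha$, equivalently $w(\chi) = \chi\alpha^{-1}$; since $\alpha \neq 1$, such a $w$ cannot be the identity. I would then partition $\Thadm$ according to which nontrivial Weyl element realizes this relation: for each $w \in W(K) \setminus \{1\}$ set
$$S_w = \{\chi \in \Thadm : w(\chi) = \chi \alpha^{-1}\},$$
so that $\Thadm$ is the union of these $\#W(K)-1$ sets. Pigeonhole then yields some $w \neq 1$ with
$$\#S_w \geq \frac{\#\Thadm}{\#W(K)-1} > \#\Thinadm,$$
where the final strict inequality is the hypothesis of the proposition.

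To produce a contradiction, I would bound $\#S_w$ from above by $\#\Thinadm$. For any $\chi, \chi' \in S_w$ one computes
$$w(\chi/\chi') = w(\chi)/w(\chi') = (\chi\alpha^{-1})/(\chi'\alpha^{-1}) = \chi/\chi',$$
so $\chi/\chi'$ lies in the $w$-fixed subgroup $(\hatT)^w$. Fixing any $\chi_0 \in S_w$ therefore gives $S_w \subseteq \chi_0 \cdot (\hatT)^w$, and hence $\#S_w \leq \#(\hatT)^w$. Every element of $(\hatT)^w$ is fixed by the nontrivial Weyl element $w$, so by Definition \ref{def:admissible} none of these is admissible; since $\hatT$ is the disjoint union of $\Thadm$ and $\Thinadm$, this yields $(\hatT)^w \subseteq \Thinadm$ and therefore $\#S_w \leq \#\Thinadm$, contradicting the lower bound above. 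Hence no such $\alpha$ exists, and $Q_T = \{1\}$.

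The only real choice in the argument is the partition: grouping admissible characters by the Weyl element realizing $\chi/w(\chi) = \alpha$ is what makes the coset structure of $S_w$ inside $(\hatT)^w$ visible, and it is that coset structure together with the inadmissibility of $w$-fixed characters that feeds directly into the matching bound by $\#\Thinadm$. I do not expect any serious obstacle beyond recognizing this structure.
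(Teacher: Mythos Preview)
Your proof is correct and follows essentially the same approach as the paper's: both define the sets $S_w$, apply pigeonhole to find some $w\neq 1$ with $\#S_w > \#\Thinadm$, and then observe that ratios of elements of $S_w$ are $w$-fixed and hence inadmissible. The only cosmetic difference is that you phrase the final step as a coset bound $S_w \subseteq \chi_0\cdot(\hatT)^w$ with $(\hatT)^w \subseteq \Thinadm$, while the paper argues contrapositively that some ratio $\chi/\chi'$ would have to be admissible yet $w$-fixed; the content is identical.
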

\begin{proof}
For $w \in W(K)$, set
$$S_w = \{\chi \in \Thadm \st  \frac{\chi}{w(\chi)} = \alpha\}.$$
Note that if $S_1$ is nonempty then we get $\alpha = 1$ immediately, so we may
assume the contrary.  Then by the pigeonhole principle, there is a $w \in W(K)$
with $\# S_w > \# \Thinadm$.  Pick $\chi \in S_w$; since $\# S_w > \#\Thinadm$
there is some $\chi' \in S_w$ with $\frac{\chi}{\chi'}$ admissible.  We now have
$$\frac{\chi}{w(\chi)} = \alpha = \frac{\chi'}{w(\chi')}$$
and therefore $\frac{\chi}{\chi'}$ is fixed by $w$.  Since $\frac{\chi}{\chi'}$ is admissible, we must have $w = 1$ and thus
$$\alpha = \frac{\chi}{\chi} = 1.$$
\end{proof}

Recall that Frobenius acts on $X^*(T)$ via an endomorphism $F = qF_0$, where
$F_0$ is an automorphism of finite order \cite[p. 82]{carter:93a}.  So it makes sense
to vary $q$: we fix $F_0$ and consider the tori dual to the $\Gal(\Fqb/\Fq)$-modules
with Frobenius acting through $qF_0$.

\begin{corollary}[{c.f. \cite[Lemma 8.4.2]{carter:93a}}]
Consider the family of tori $T_q$ with the same $F_0$.  Then for sufficiently large $q$,
$Q_{T_q} = \{ 1 \}$ (regardless of the $G$ in which $T_q$ is embedded).
\end{corollary}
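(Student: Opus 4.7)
The plan is to reduce the statement to Proposition~\ref{pigeonhole}: it suffices to show that, as $q$ grows with $F_0$ fixed, the inequality $\#\Thadm > (\#W(K) - 1)\cdot\#\Thinadm$ eventually holds for every embedding of $T_q$ into a connected reductive group. I would carry this out in three steps.

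First, I would estimate $\#\hatT$. Depth zero characters of $T_q(\OK)$ are naturally the characters of the finite torus $T_q(\Fq)$, and by the standard formula $|T_q(\Fq)| = |\det(qF_0 - 1 \mid X^*(T))|$. This is a polynomial in $q$ of degree $r := \dim T$ with leading coefficient $\pm 1$, so $\#\hatT = q^r + O(q^{r-1})$. Second, I would bound $\#\Thinadm$. Every inadmissible $\chi \in \hatT$ is fixed by some nontrivial $w \in W(K)$, and the subgroup $\hatT^w$ of $w$-fixed characters is dual to the coinvariants of $T_q(\Fq)$ under $w$; the same determinant formula, applied to the action of $qF_0$ on the appropriate sublattice of $X^*(T)$, shows that $\#\hatT^w$ is polynomial in $q$ of degree $\dim_\QQ X^*(T)^w_\QQ$. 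Since $w \neq 1$ acts nontrivially on $X^*(T)$, this degree is at most $r - 1$. Summing over the finitely many $w \neq 1$ in $W(K)$ gives $\#\Thinadm = O(q^{r-1})$, whence $\#\Thadm = q^r + O(q^{r-1})$.

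Third, I would observe that $\#W(K)$ is bounded independently of both $q$ and the ambient $G$. The Weyl group $W = \Normalizer{T}{G}/T$ acts faithfully on the character lattice $X^*(T)$, so it is a finite subgroup of $\GL_r(\ZZ)$; by Minkowski's theorem such subgroups have order bounded by a constant depending only on $r$ (for instance by $|\GL_r(\ZZ/3\ZZ)|$). In particular $W(K) \leq W$ is uniformly bounded. Combining the three estimates, the hypothesis of Proposition~\ref{pigeonhole} is satisfied for all sufficiently large $q$, and we conclude $Q_{T_q} = \{1\}$.

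The only real content is Step~3: although $W(K)$ depends a priori on the choice of $G$ containing $T_q$, the reduction to finite subgroups of $\GL(X^*(T)) \cong \GL_r(\ZZ)$ gives a bound depending only on the rank of $T$. This is precisely what makes the conclusion independent of $G$, and is the source of the parenthetical remark in the statement.
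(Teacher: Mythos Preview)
Your argument is correct and follows the same strategy as the paper: estimate $\#\hatT$ and $\#\hatT^w$ as polynomials in $q$ of degrees $r$ and at most $r-1$, bound $\#W(K)$ uniformly in $G$, and invoke Proposition~\ref{pigeonhole}. The paper cites \cite[3.3.5]{carter:93a} for the counting estimates and handles the uniformity in $G$ by remarking that ``there are finitely many possibilities for the absolute Weyl group of $T$''; your appeal to Minkowski's bound on finite subgroups of $\GL_r(\ZZ)$ is a more explicit way of making the same point. One small thing to make precise: the implicit constant in your bound $\#\Thinadm = O(q^{r-1})$ a priori depends on $W(K)$, but since Minkowski also bounds the number of possible elements $w$ (up to $\GL_r(\ZZ)$-conjugacy), the constant is in fact uniform---you might say this in one sentence so the ``sufficiently large $q$'' is visibly independent of $G$.
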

\begin{proof}
We will write $T$ for a general torus in the family and $r$ for the common dimension.
Note that $\hatT$ is the set of $\Fq$ points of a dual torus, also of rank $r$ over $\Fq$.
For $w \in W(K)$ with $w \ne 1$ the centralizer $\Z_{\hatT}(w)$ is a proper $F$-stable
subgroup of $\hatT$, and thus $\dim(\Z_{\hatT}(w)) \le r - 1$.  By \cite[3.3.5]{carter:93a},
$\# \hatT$ is a polynomial in $q$ of degree $r$ and $\# \Z_{\hatT}(w)$ is a polynomial
in $q$ of degree at most $r-1$.  Thus the ratio
$$\frac{\# \Thadm}{\# \Thinadm} = \frac{\# \hatT - \sum_{1 \ne w \in W} \# \Z_{\hatT}(w)}{\sum_{1 \ne w \in W} \# \Z_{\hatT}(w)}$$
grows without bound as $q$ does.  There are finitely many possibilities for the absolute
Weyl group of $T$, so Proposition \ref{pigeonhole} gives the desired result.
\end{proof}

In computing $Q_T$ for small $q$ the following result is useful:

\begin{proposition} \label{orderdiv}
If $\alpha \in Q_T$ has order $d$ and $\chi \in \Thadm$ has order $m$ then $d$ divides $m$.
\end{proposition}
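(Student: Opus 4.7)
The plan is to exploit the defining property of $Q_T$ directly. Since $\alpha \in Q_T$, for the given admissible character $\chi \in \Thadm$ we can pick some $w \in W(K)$ such that $\alpha = \chi / w(\chi)$.

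From here the proof is a one-line computation: raise both sides to the $m$th power. Since $W(K)$ acts on $\hatT$ by group automorphisms, $w(\chi)^m = w(\chi^m)$, and since $\chi$ has order $m$ the character $\chi^m$ is trivial, so $w(\chi^m)$ is trivial as well. Thus
\begin{equation*}
\alpha^m \;=\; \frac{\chi^m}{w(\chi)^m} \;=\; \frac{\chi^m}{w(\chi^m)} \;=\; 1.
\end{equation*}

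Since $\alpha$ has order $d$, the relation $\alpha^m = 1$ forces $d \mid m$. There is no real obstacle here; the only subtlety to keep in mind is that the witness $w \in W(K)$ for the ratio $\alpha = \chi/w(\chi)$ is allowed to depend on $\chi$, but since $\chi$ is fixed throughout the argument and we are given the existence of at least one such $w$, this poses no difficulty.
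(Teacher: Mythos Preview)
Your proof is correct and essentially identical to the paper's: choose $w\in W(K)$ with $\alpha=\chi/w(\chi)$, raise to the $m$th power, and use that $w(\chi)$ also has order $m$ to conclude $\alpha^m=1$. The only difference is that you spell out $w(\chi)^m = w(\chi^m)=1$ via the group-automorphism action, whereas the paper simply notes that $w(\chi)$ has order $m$.
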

\begin{proof}
There is a $w \in W(K)$ with
$$\frac{\chi}{w(\chi)} = \alpha.$$
Since $w(\chi)$ also has order $m$, raising both sides to the $m$th power  yields $\alpha^m = 1$.
\end{proof}

Finally, we note that Lemma 8 of Bushnell-Henniart \cite[p. 511]{bushnell-henniart:10a} is equivalent to
the statement that $Q_T$ is trivial when $T$ is a $K$-minisotropic torus in $\GL_n$.

\section{Rectifiers for general reductive groups} \label{section:general_rectifiers}

Suppose that $G$ is a connected reductive group defined over a
$p$-adic field $K$.  Fix an unramified $K$-torus $T \subset G$ with splitting field $L$.
Let $\varphi : \Weil_K \rightarrow {}^L G$ be a
Langlands parameter for $G(K)$, and suppose that $\varphi$ factors
through a group of type L for $T$.  Any Langlands parameter with image in the normalizer
of a maximal torus will factor in this way for some $T$.

As in \S\ref{section:groups_of_type_L}, one can canonically
associate to $\varphi$ a character $\xi_{\varphi}$ of $T(L)_{\Gamma}$.
Recall again the Tate cohomology sequence
$$1 \rightarrow \HT{-1}(\Gamma,T(L)) \rightarrow T(L)_{\Gamma} \xrightarrow{\Nm} T(K)
= T(L)^{\Gamma} \rightarrow \HT{0}(\Gamma,T(L)) \rightarrow 1.$$
Suppose that $\HT{0}(\Gamma, T(L)) = 0$, in which case
$T(L)_{\Gamma}$ surjects onto $T(K)$.  Let us also suppose that
$\varphi$ does not factor through a proper Levi subgroup, so that the
representations in the $L$-packet associated to $\varphi$ are
conjecturally all supercuspidal (see \cite[\S 3.5]{reeder-debacker:09a}).
When $G = \GL_n$ we show in \S\ref{section:BH_compat} that 
$\HT{0}(\Gamma, T(L)) = \HT{-1}(\Gamma, T(L)) = 0$ and thus
$T(L)_{\Gamma} \cong T(K) \cong \Lx$.  In this case
$(L/K, \xi_{\varphi})$ is an admissible pair; to construct the local Langlands
correspondence one proceeds as in \S\ref{section:BH_recall} by
attaching the supercuspidal representation $\pi_{\xi_{\varphi} \cdot
  \mu_{\xi_{\varphi}}}$ to $\xi_{\varphi}$, via the construction of Bushnell and Henniart.

For other groups $G$ there are some constructions of supercuspidal $L$-packets $\Lpack(\chi)$
from characters $\chi$ of $T(K)$ \cite{reeder-debacker:09a, kaletha:13a, reeder:08a}.
However, as we have seen, a Langlands parameter $\varphi$ does not naturally
provide a character of $T(K)$, but rather a character of
$T(L)_{\Gamma}$.

\begin{definition} \label{def:rectifier}
  Let $T$ be a $K$-minisotropic torus in $G$, that splits over an unramified
  extension $L$.  A \emph{rectifier} for $T$ is a function
  $$\bmu : (T, \xi) \mapsto \mu_{\xi}$$
  which attaches to each $(T, \xi) \in P_G(K)$ a character
  $\mu_{\xi}$ of $T(L)_{\Gamma}$ satisfying the following conditions:

\begin{enumerate}
\item The character $\mu_{\xi}$ is tamely ramified (i.e. trivial on
  $T(\PL)_{\Gamma}$),

\item The character $\xi \cdot \mu_{\xi}$ descends to $T(K)$, is admissible,
and $\varphi \mapsto \Lpack(\xi_{\varphi} \cdot \mu_{\xi_{\varphi}})$
  is the local Langlands correspondence,

\item If $(T, \xi_1)$ and $(T, \xi_2)$ are admissible pairs such that
$\xi_1^{-1} \xi_2$ is tamely ramified then
$\mu_{\xi_1} = \mu_{\xi_2}$.
\end{enumerate}
We say that two rectifiers $\bmu$ and $\bmu'$ for $T$ are \emph{equivalent}
if there is some $\alpha \in Q_T$ so that
\begin{align*}
\mu_\xi' &= \alpha \mu_\xi \qquad \mbox{for depth zero $\xi$,} \\
\mu_\xi' &= \mu_\xi \qquad\ \  \mbox{for positive depth $\xi$.}
\end{align*}

\end{definition}

Since we have assumed $\HT{0}(\Gamma,T(L)) = 0$, the condition that $\xi \cdot \mu_\xi$
descends to $T(K)$ is equivalent to $\xi \cdot \mu_\xi$ vanishing on $\HT{-1}(\Gamma, T(L))$.  The
notion of equivalence is tailored for Theorem \ref{thm:unique_semisimple}; for some tori (such as the
$\SL_2(\mathbb{Q}_3)$ example at the beginning of \S \ref{Q_T}) there are
multiple equivalent rectifiers.

\begin{conjecture} \label{conj:unique_rectifier}
For $T$ as in Definition \ref{def:rectifier}, $T$ admits a unique rectifier up to equivalence.
\end{conjecture}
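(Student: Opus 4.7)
The plan proceeds in two stages. For existence, for each admissible $\xi$, choose a parameter $\varphi \colon \Weil_K \to {}^L G$ with $\xi_\varphi = \xi$ and set
$$\mu_\xi := \xi_{\varphi_{\hat w}}^{-1},$$
where $\hat w \in \hat W$ is the image of $\varphi(\Fr)\hat\theta^{-1}$ under $\hat N \to \hat W$ and $\varphi_{\hat w}$ is the unramified parameter of Definition \ref{def:phiu}. Since $\hat w$ depends only on $T$ (not on $\xi$), $\mu_\xi$ is a single character of $T(L)_\Gamma$ independent of $\xi$, and condition (3) holds trivially. Condition (1) is immediate: $\xi_{\varphi_{\hat w}}$ is unramified because $\varphi_{\hat w}|_{I_K} \equiv 1$, so $\mu_\xi$ is trivial on $T(\OL)_\Gamma \supset T(\PL)_\Gamma$.

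For condition (2), the parameters $\varphi$ and $\varphi_{\hat w}$ have Frobenius images differing by an element of $\hat T$ (both lift $\hat w$), so Lemma \ref{lem:toral_modification} shows that $\xi\mu_\xi$ vanishes on $\HT{-1}(\Gamma, T(L))$; combined with the hypothesis $\HT{0}(\Gamma, T(L)) = 0$ this forces $\xi\mu_\xi$ to descend to a character of $T(K)$. One must then identify this descended character with the $\chi_\varphi$ of DeBacker--Reeder and Reeder, so that $\Lpack(\xi\mu_\xi)$ matches the $L$-packet assigned to $\varphi$ under LLC. This is Proposition \ref{prop:existenceofrectifier} in the semisimple case, and the main technical obstacle is its extension to the reductive case: when $T(K)$ is noncompact, $T(\OL)_\Gamma$ no longer surjects onto $T(K)$ and one must independently control the cocenter contribution, presumably by a refinement of Lemma \ref{lem:GDR_compat} using local class field theory. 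A secondary issue, noted in the introduction, is that at positive depth different $L$-packet constructions give genuinely different rectifiers, so the positive-depth verification is inherently tied to a fixed choice of $\chi \mapsto \Lpack(\chi)$.

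For uniqueness up to equivalence, let $\bmu, \bmu'$ be two rectifiers. Condition (3), together with the observation that any two depth zero admissibles have tame ratio, forces each rectifier to be constant on the depth zero locus: $\mu_\xi = \mu_0$ and $\mu_\xi' = \mu_0'$ for all such $\xi$. Setting $\alpha := \mu_0'/\mu_0$, condition (2) says that $\xi\mu_0$ and $\xi\mu_0'$ yield the same $L$-packet for every depth zero admissible $\xi$, so they are $W(K)$-conjugate on $T(K)$: $\xi\mu_0' = w_\xi(\xi\mu_0)$ for some $w_\xi \in W(K)$. Rearranging gives $\alpha^{-1} = (\xi\mu_0)/w_\xi(\xi\mu_0)$, and since $\xi\mu_0$ ranges over all depth zero admissibles as $\xi$ does, $\alpha^{-1}$---and hence $\alpha$---lies in $Q_T$. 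After replacing $\bmu'$ by the equivalent rectifier with depth zero value $\mu_0$, one must show $\mu_\xi = \mu_\xi'$ for positive-depth $\xi$; this is expected to follow from condition (3), the $W(K)$-equivariance of the $L$-packet construction, and a careful analysis of the Weyl elements compatible with the positive-depth jump structure of admissible characters.
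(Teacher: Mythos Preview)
The statement you are attempting to prove is stated in the paper as a \emph{conjecture}; the paper does not prove it. What the paper does prove is the weaker Theorem~\ref{thm:unique_semisimple}: for \emph{semisimple} $G$, the torus $T$ admits a unique \emph{weak} rectifier up to equivalence, where a weak rectifier is defined only on \emph{minimal} admissible pairs (Definition~\ref{def:general_pair}). Your proposal follows exactly the blueprint of that proof, and the gaps you flag are precisely the reasons the paper restricts its scope.

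For existence, your candidate $\mu_\xi = \xi_{\varphi_{\hat w}}^{-1}$ and the verification of conditions (1)--(3) match the paper's argument for Theorem~\ref{thm:unique_semisimple}. The obstruction you name---extending Proposition~\ref{prop:existenceofrectifier} beyond semisimple $G$---is real and unresolved in the paper; the Remark after Theorem~\ref{thm:unique_semisimple} points to the central character recipe of Gross--Reeder as a possible tool when $\HT{0}(\Gamma,T(L)) \ne 0$. There is a second existence obstruction you do not mention: the paper explicitly notes that Reeder's positive-depth construction of $\Lpack(\chi)$ does not apply to all admissible $\chi$, only to minimal ones, so for general (non-minimal) admissible pairs the association $\chi \mapsto \Lpack(\chi)$ appearing in condition~(2) of Definition~\ref{def:rectifier} is not even defined in the literature the paper relies on.

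For uniqueness, the depth-zero part of your argument aligns with the paper's. In positive depth, however, the paper's argument uses minimality in an essential way that your sketch does not replace: restricting ${}^{w_\xi}(\mu_\xi \cdot \xi) = \mu'_\xi \cdot \xi$ to $T(\PL)_\Gamma$ kills the tamely ramified rectifiers and yields ${}^{w_\xi}(\xi) = \xi$ on $T(\PL)_\Gamma$; \emph{minimality} of $\xi$ then forces $w_\xi = 1$. For a non-minimal admissible $\xi$ this restriction can be fixed by a nontrivial $w_\xi$, and your appeal to ``the positive-depth jump structure'' does not supply an alternative mechanism to pin down $w_\xi$. This is the genuine gap between the theorem the paper proves and the full conjecture.
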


We note that, as the local Langlands correspondence is not known in general, we must restrict
ourselves to cases where supercuspidal $L$-packets have been constructed.
Since we are in the present paper considering the situation when $T$ is unramified,
we consider those $L$-packets constructed in \cite{reeder-debacker:09a} and \cite{reeder:08a}.
In the setting of Reeder \cite{reeder:08a},
we must further restrict our scope since his constructions do not apply to all admissible pairs.

\begin{definition}\label{def:general_pair}
Suppose $(T, \xi) \in P_G(K)$.
\begin{enumerate}
\item The \emph{depth} of $(T, \xi)$ is the integer $r$ so that $\xi$
is trivial on $T(\PL^{r+1})_{\Gamma}$ but nontrivial on
$T(\PL^{r})_{\Gamma}$
\item An admissible pair of depth $r$ is \emph{minimal}
if $\xi|_{T(\PL^{r})_{\Gamma}}$
is not fixed by any element of $W(K)$.
We denote by $\Pmin(K)$ the set
of minimal admissible pairs in $G$.
\item A \emph{weak rectifier} for $T \subset G$ is a function
\begin{align*}
\mumin : (T, \xi) \mapsto \mu_{\xi}
\end{align*}
which attaches to each $(T, \xi) \in \Pmin(K)$ a character
  $\mu_{\xi}$ of $T(L)_{\Gamma}$, satisfying conditions (1)-(3)
  of Definition \ref{def:rectifier}.
\end{enumerate}
We define equivalence of weak rectifiers as in Definition \ref{def:rectifier}.
\end{definition}

We note that this definition of minimal admissible pair generalizes
the definition of minimal admissible pair of Bushnell and Henniart in
the case of unramified tori (see \cite[\S2.2]{bushnell-henniart:05a}).

\begin{theorem} \label{thm:unique_semisimple}
For $G$ semisimple and $T$ as in Definition \ref{def:rectifier}, $T$ admits a unique weak rectifier up to equivalence.
\end{theorem}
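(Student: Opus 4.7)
The plan is to exhibit a canonical weak rectifier $\bmu^{0}$ via Proposition \ref{prop:existenceofrectifier}, then show every weak rectifier is equivalent to it: trivially in positive depth and up to an element of $Q_T$ in depth zero. For existence, let $\hat{w} \in \hat{W}$ be the element associated to $T$ in \S\ref{section:gross_debacker_reeder}; by the remark following Proposition \ref{prop:existenceofrectifier}, the character $\nu_T := \xi_{\varphi_{\hat{w}}}^{-1}$ of $T(L)_{\Gamma}$ depends only on $T$. Define $\mu^{0}_\xi := \nu_T$ for every $(T,\xi) \in \Pmin(K)$. Condition (1) holds because $\varphi_{\hat{w}}|_{I_K} \equiv 1$ makes $\nu_T$ unramified, hence trivial on $T(\OL)_\Gamma$. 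For condition (2), pick a parameter $\varphi$ with $\xi_\varphi = \xi$; Proposition \ref{prop:existenceofrectifier} gives $\xi \cdot \nu_T = \chi_\varphi \circ \Nm$, which descends to the admissible character $\chi_\varphi$ on $T(K)$, and $\varphi \mapsto \Lpack(\chi_\varphi)$ is the local Langlands correspondence by the DeBacker--Reeder and Reeder constructions. Condition (3) is automatic since $\nu_T$ does not depend on $\xi$.

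For uniqueness, let $\bmu$ be any weak rectifier and set $\alpha_\xi := \mu_\xi \nu_T^{-1}$. Condition (2) applied to both $\bmu$ and $\bmu^{0}$ makes $\xi \mu_\xi$ and $\xi \nu_T$ descend to admissible characters of $T(K)$ that yield the same $L$-packet for the parameter $\varphi$. Using that two admissible characters of $T(K)$ yield the same $L$-packet if and only if they are $W(K)$-conjugate, one finds $w_\xi \in W(K)$ with $\xi\mu_\xi = w_\xi(\xi\nu_T)$, so $\alpha_\xi$ descends to $w_\xi(\chi_\varphi)/\chi_\varphi$ on $T(K)$. If $\xi$ has positive depth $r$, then $\mu_\xi$ and $\nu_T$ are tame, forcing $\alpha_\xi$ to be trivial on $T(\PK^r)$. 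A depth-$r$ analogue of Lemma \ref{lem:GDR_compat}, obtained from Proposition \ref{prop:T0_cohom_triv} by restricting to the deeper filtration step, gives a $W(K)$-equivariant isomorphism $T(\PL^r)_\Gamma \xrightarrow{\Nm} T(\PK^r)$ that identifies $\chi_\varphi|_{T(\PK^r)}$ with $\xi|_{T(\PL^r)_\Gamma}$; triviality of $\alpha_\xi$ at depth $r$ then forces $\xi|_{T(\PL^r)_\Gamma}$ to be fixed by $w_\xi$, and minimality of $(T,\xi)$ yields $w_\xi = 1$, so $\mu_\xi = \nu_T$.

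For depth zero $\xi$, condition (3) makes $\alpha_\xi = \alpha$ a single depth-zero character of $T(\OK)$ after descent. As $\xi$ ranges over admissible depth-zero characters of $T(L)_\Gamma$, the corresponding $\chi_\varphi$ ranges over all admissible depth-zero characters of $T(\OK)$, so the identity $\alpha^{-1} = \chi_\varphi/w_\xi(\chi_\varphi)$ realizes the defining property of $Q_T$ for every admissible $\chi \in \Thadm$. Hence $\alpha^{-1} \in Q_T$, and $\bmu$ is equivalent to $\bmu^{0}$. The main technical hurdle lies in extracting two facts from the DeBacker--Reeder and Reeder constructions: the $W(K)$-orbit criterion for equality of $L$-packets, and the surjectivity of the association $\xi \mapsto \chi_\varphi$ onto admissible depth-zero characters of $T(\OK)$, which is what enables the defining condition of $Q_T$ to hold for \emph{every} $\chi$ rather than only those in some proper subset.
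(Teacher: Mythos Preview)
Your proof is correct and follows essentially the same strategy as the paper's: construct the canonical weak rectifier $(T,\xi)\mapsto \xi_{\varphi_{\hat w}}^{-1}$ via Proposition~\ref{prop:existenceofrectifier}, then use the $W(K)$-conjugacy criterion for $L$-packets (the paper cites \cite[\S10]{murnaghan:11}) to force $w_\xi=1$ in positive depth by minimality and to place $\alpha$ in $Q_T$ in depth zero. The only cosmetic difference is that the paper compares two arbitrary weak rectifiers and handles positive depth by restricting ${}^{w_\xi}(\mu_\xi\xi)=\mu'_\xi\xi$ directly to $T(\PL)_\Gamma$ inside $T(L)_\Gamma$, whereas you compare to the canonical $\bmu^0$ and pass through $T(\PK^r)$ via $\Nm$; both routes are valid and lead to the same conclusion.
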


\begin{proof}
We first prove existence.
First recall that $T$ can be defined
via Galois twisting by a Weyl group element $w$.  We defined in
\S\ref{section:gross_debacker_reeder}
a Langlands parameter $\varphi_{\hat{w}} :\Weil _K \rightarrow {}^L G$ by
sending Frobenius to the canonical lift
$\tilde{w} \in \widetilde{W}$ of $\hat{w} \in \hat{W}$, and by setting
$\varphi_{\hat{w}}$ to be trivial on $I_K$.
For semisimple $G$
we proved in Proposition \ref{prop:existenceofrectifier}
that the function $$(T, \xi) \mapsto \xi_{\varphi_{\hat{w}}}^{-1}$$
satisfies condition (2) of Definition \ref{def:rectifier}.  Moreover, the function also
satisfies condition (1): $\varphi_{\hat{w}}|_{I_K} \equiv 1$ and thus
$\xi_{\varphi_{\hat{w}}}^{-1}$ is unramified.
Finally, $\xi_{\varphi_{\hat{w}}}$ is independent of $\xi$
and thus condition (3) is automatically satisfied.
We may therefore set
$\mumin(T,\xi) = \xi_{\varphi_{\hat{w}}}^{-1}$.

We now prove uniqueness.
Let $\xi$ range over the set of characters of $T(L)_{\Gamma}$
such that $(T, \xi) \in \Pmin(K)$, and let
$\bmu$ and $\bmu'$ be weak rectifiers for
$T \subset G$.  By hypothesis, we have
$$\Lpack(\mu_{\xi} \cdot \xi) = \Lpack(\mu'_{\xi} \cdot \xi).$$
By \cite[\S10]{murnaghan:11}, there exists $w_{\xi} \in W(K)$,
depending on $\xi$, such that
$${}^{w_{\xi}} (\mu_\xi \cdot \xi) = \mu'_\xi \cdot \xi.$$
Suppose that $\xi$ has positive depth.
Restricting the equation
${}^{w_\xi} (\mu_\xi \cdot \xi) = \mu'_\xi \cdot \xi$
to $T(\PL)_{\Gamma}$, we get that ${}^{w_{\xi}} (\xi) = \xi$,
by condition (1) of Definition \ref{def:rectifier}.
Since $\xi$ is minimal, we get that $w_{\xi} = 1$, which implies
that $\mu_{\xi} = \mu'_{\xi}$.

Now suppose that $\xi$ has depth zero.
Define $\lambda$ on $T(\OL)_{\Gamma} \cong T(\OK)$ by $\lambda = ({}^{w_\xi} (\mu_\xi))^{-1} \cdot \mu'_\xi$,
which is independent of $\xi$ by condition (3). The equation ${}^{w_\xi} (\mu_\xi \cdot \xi) = \mu'_\xi \cdot \xi$
implies that $\lambda \in Q_T$.  Since $\mu_\xi \cdot \xi$ and $\mu_\xi' \cdot \xi$ descend to
$T(K)$ by condition (2) of Definition \ref{def:rectifier}, $\mu_\xi$ and $\mu_\xi'$ have the
same restriction to $\hat{H}^{-1}(\Gamma, T(L))$.  Since $G$ is semisimple we may pull $\lambda$ back to
a character on $T(L)_\Gamma$, vanishing on $\HT{-1}(\Gamma, T(L))$.  We get that $\mu'_\xi = \lambda\mu_\xi$
and thus $\bmu$ is equivalent to $\bmu'$.
\end{proof}

\begin{remark} $ $
\begin{enumerate}
\item The condition $\HT{0}(\Gamma, T(L)) = 0$ was necessary in order to obtain a character on $T(K)$ rather
than the image of the norm map $T(L) \mapsto T(K)$.  For non-semisimple groups where $\HT{0}(\Gamma, T(L))$
is nontrivial we hope that the recipe for the central character in \cite{gross-reeder:09a} will provide an extension to all of $T(K)$.
\item The rectifier in our setting is constant as a function of $\xi$.  We expect a dependence on $\xi$ for ramified tori.
\item The behavior of rectifiers under change of group is not yet clear to us.  There may be a natural relationship between
rectifiers when a torus is embedded into two
different reductive groups with isomorphic Weyl groups.  Similarly, when given an embedding
$H \subset G$, a natural relationship between the rectifiers for tori in $H$ and $G$ would allow us to apply the results of \cite{bushnell-henniart:10a} to rectifiers for general groups.
\end{enumerate}
\end{remark}

\section{Compatibility with Bushnell-Henniart} \label{section:BH_compat}

In this section we show that our function $\mumin$
agrees with the rectifier of Bushnell-Henniart in the depth
zero setting: see Theorem \ref{thm:bh_agreement}.
Let $L = K_n$ and set 
$T = \Res_{L/K}(\Gm)$.  We begin by computing the Tate cohomology groups of $T$.

\begin{proposition}
$\HT{0}(\Gamma, X_*(T)) = 0$.
\end{proposition}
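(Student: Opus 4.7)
The plan is to identify $X_*(T)$ as an induced $\Gamma$-module and then invoke the standard vanishing of Tate cohomology on induced modules. Specifically, since $T = \Res_{L/K}(\Gm)$ splits over $L$ as $\prod_{\sigma \in \Gamma} \Gm$ with $\Gamma$ permuting the factors by its regular action, the cocharacter lattice is $X_*(T) \cong \ZZ[\Gamma]$ as a $\Gamma$-module. First I would spell out this identification, fixing the convention that $\tau \in \Gamma$ acts on the basis $\{e_\sigma\}_{\sigma \in \Gamma}$ by $\tau \cdot e_\sigma = e_{\tau\sigma}$.

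With $X_*(T) \cong \ZZ[\Gamma] = \Ind_{\{1\}}^{\Gamma} \ZZ$ in hand, the vanishing follows by Shapiro's lemma for Tate cohomology:
$$\HT{i}(\Gamma, \Ind_{\{1\}}^{\Gamma} \ZZ) \cong \HT{i}(\{1\}, \ZZ) = 0 \mbox{ for all } i,$$
since the trivial group has vanishing Tate cohomology in every degree. In particular $\HT{0}(\Gamma, X_*(T)) = 0$.

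For the reader who prefers a direct verification in degree zero, I would add the one-line computation: an element $\sum_\tau a_\tau \tau \in \ZZ[\Gamma]$ is $\Gamma$-invariant iff all $a_\tau$ coincide, so $X_*(T)^{\Gamma} = \ZZ \cdot \Nm_\Gamma$ where $\Nm_\Gamma = \sum_{\sigma \in \Gamma} \sigma$; on the other hand $\Nm(\sum_\tau a_\tau \tau) = \bigl(\sum_\tau a_\tau\bigr) \Nm_\Gamma$, so the image of the norm map is also $\ZZ \cdot \Nm_\Gamma$, giving the quotient zero.

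There is no real obstacle here --- the statement is essentially a restatement of the fact that $\Res_{L/K}\Gm$ is an induced torus, for which all Tate cohomology of the cocharacter lattice vanishes. The only mild care needed is to verify the identification $X_*(\Res_{L/K}\Gm) \cong \ZZ[\Gamma]$ with the correct $\Gamma$-action; everything else is formal.
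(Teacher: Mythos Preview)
Your proof is correct. The paper's own argument is precisely your ``one-line computation'': it identifies $X_*(T)$ with $\ZZ^n$ on which $\Gamma$ permutes the standard basis, observes that the invariants are the diagonal copy of $\ZZ$, and checks that $\Nm(1,0,\ldots,0)=(1,\ldots,1)$ generates this diagonal.

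Your primary route via Shapiro's lemma is slightly different and in a sense more efficient: recognizing $X_*(T)\cong\ZZ[\Gamma]$ as an induced module kills $\HT{i}$ in every degree simultaneously, so it would also dispatch the paper's next proposition ($\HT{-1}(\Gamma,X_*(T))=0$) without a separate computation. The paper instead treats each degree by hand, which is more elementary but requires two short arguments rather than one structural observation.
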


\begin{proof}
Since $\Gamma$ acts on $X_*(T)$ by permuting basis vectors,
$X_*(T)^{\Gamma}$ is the copy of $\ZZ$ embedded diagonally in
$X_*(T) = \mathbb{Z}^n$.  Note that $$\Nm(1,0,0,\cdots,0) = (1,1,\cdots,1),$$ so
$X_*(T)^{\Gamma} \subset \Nm(X_*(T))$.
\end{proof}

\begin{proposition}
$\HT{-1}(\Gamma, X_*(T)) = 0$.
\end{proposition}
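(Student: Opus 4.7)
The plan is to exploit the fact that $T = \Res_{L/K}(\Gm)$ is an induced torus, so its cocharacter lattice is the regular representation $X_*(T) \cong \ZZ[\Gamma]$, with $\Gamma$ permuting the basis vectors $e_1, \ldots, e_n$ cyclically. Since $\ZZ[\Gamma] = \Ind_{\{1\}}^{\Gamma} \ZZ$, Shapiro's lemma gives $\HT{i}(\Gamma, X_*(T)) \cong \HT{i}(\{1\}, \ZZ)$, which vanishes for all $i$ (in particular for $i = -1$). This immediately yields the conclusion.

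If a self-contained argument is preferred over invoking Shapiro, I would compute $\HT{-1}$ directly, using that $\Gamma$ is cyclic with generator $\sigma$ (say $\sigma(e_i) = e_{i+1}$, indices mod $n$), so that
\[
\HT{-1}(\Gamma, X_*(T)) \;=\; \ker(\Nm)\big/(\sigma - 1)X_*(T).
\]
Given $(a_1,\ldots,a_n) \in \ker(\Nm)$, i.e.\ $\sum_i a_i = 0$, I would produce an explicit preimage under $\sigma - 1$ by setting $b_n = 0$ and $b_j = -\sum_{i=1}^{j} a_i$ for $1 \le j \le n-1$; then a direct check shows $(\sigma - 1)(b_1,\ldots,b_n) = (a_1,\ldots,a_n)$, the final coordinate working out precisely because $\sum_i a_i = 0$.

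There is no real obstacle: the content of the statement is just that the cocharacter lattice of an induced torus is a cohomologically trivial $\Gamma$-module. Either approach (Shapiro's lemma on the regular representation, or the telescoping explicit preimage) is essentially a one-line proof, and the preceding proposition was of exactly the same flavor. The main thing to be careful about is simply matching the convention for the $\Gamma$-action on $X_*(\Res_{L/K}\Gm)$ that was fixed in the previous proposition.
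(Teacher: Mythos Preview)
Your proposal is correct. Your direct computation is essentially the paper's approach: the paper also shows $\ker(\Nm)\subset I_\Gamma X_*(T)$ explicitly, but instead of building a telescoping preimage via partial sums it notes that $\ker(\Nm)$ is generated by the $e_i-e_j$ and writes each such generator as $(1-\tau)e_i$ for the $\tau\in\Gamma$ sending $e_i$ to $e_j$. Your Shapiro's lemma argument is a genuinely different route: it is more conceptual, handles all $i$ at once (in particular it subsumes the preceding proposition on $\HT{0}$), and makes transparent that the result has nothing to do with $\Gamma$ being cyclic but only with $X_*(T)$ being induced; the paper's hands-on version, by contrast, keeps everything at the level of a quick coordinate check and avoids invoking any general machinery.
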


\begin{proof}
We note that $(a_1, a_2, \cdots, a_n) \in \ker(\Nm)$ if and only if $\sum_{i=1}^n a_i = 0$.
It is then easy to see that $\ker(\Nm)$ is generated by $e_i - e_j$ for $i < j$, where
$e_i$ are the standard basis of $\mathbb{Z}^n$.  But $e_i - e_j = (1 - \tau)e_i$ for some
$\tau \in \Gamma$, since $\Gamma$ acts by cyclic shift.  Thus $\ker(\Nm) \subset I_{\Gamma}(X_*(T))$.
\end{proof}

The Tate cohomology exact sequence for $T$ therefore reduces to
$$1 \rightarrow T(L)_{\Gamma} \xrightarrow{\sim} T(K) \rightarrow 1$$ by
Corollary \ref{cor:cohom_tori}.  We now need a basic result about powers of lifts
of Coxeter elements in $\GL_{n}(\CC)$.

\begin{proposition}\label{prop:powers_of_lifts}
Let $\hat{w}$ be a Coxeter element of $\GL_{n}(\CC)$, and let $\tilde{w}$ be the
canonical lift of $\hat{w}$ to $\widetilde{W}$. Then $\tilde{w}^n = (-1)^{n-1}$ as
as scalar matrix in $\GL_{n}(\CC)$.
\end{proposition}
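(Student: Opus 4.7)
The plan is to exhibit $\tilde{w}$ as a signed permutation matrix on $\CC^n$ and then extract the scalar $\tilde{w}^n$ from a determinant computation, rather than tracking individual signs.

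First, I would observe that for each standard simple root $\alpha_i$ of $\GL_n$, the element $\sigma_{\alpha_i}$ is the $n \times n$ identity matrix outside the $(i,i{+}1)$-block, where it equals $\left(\begin{smallmatrix} 0 & 1 \\ -1 & 0 \end{smallmatrix}\right) \in \SL_2$. In particular $\sigma_{\alpha_i}$ is a signed permutation matrix of determinant $1$, and since $\tilde{w}$ is a product of such matrices it is itself a signed permutation matrix of determinant $1$. I would then write its action on the standard basis as $\tilde{w}(e_i) = \epsilon_i\, e_{\pi(i)}$ with $\epsilon_i \in \{\pm 1\}$ and $\pi \in S_n$ the image of $\hat{w}$ in the Weyl group.

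Second, because $\hat{w}$ is a Coxeter element, $\pi$ is an $n$-cycle on $\{1, \ldots, n\}$. Iterating the signed permutation action yields
\[
\tilde{w}^n(e_j) \;=\; \Bigl(\prod_{k=0}^{n-1} \epsilon_{\pi^k(j)}\Bigr)\, e_j \;=\; \Bigl(\prod_{i=1}^n \epsilon_i\Bigr)\, e_j,
\]
since $\{\pi^k(j) : 0 \le k < n\} = \{1, \ldots, n\}$. Setting $c = \prod_i \epsilon_i$, I conclude that $\tilde{w}^n = cI$ is a scalar matrix, with the same value of $c$ obtained from every starting basis vector.

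Third, I would pin down $c$ via two computations of $\det(\tilde{w})$. On one hand, for a signed permutation matrix one has $\det(\tilde{w}) = \mathrm{sgn}(\pi)\cdot \prod_i \epsilon_i = (-1)^{n-1}\,c$, since an $n$-cycle has sign $(-1)^{n-1}$. On the other hand, $\det(\tilde{w}) = \prod_i \det(\sigma_{\alpha_i}) = 1$, since each $\sigma_{\alpha_i}$ lies in an $\SL_2$-subgroup. Equating these gives $c = (-1)^{n-1}$, as desired.

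The only real obstacle lies in step one, namely verifying that each $\sigma_{\alpha_i}$ acts via the claimed signed permutation on the defining representation of $\GL_n$. This is a direct consequence of the way the pinning $\phi_{\alpha_i}\colon \SL_2 \to \GL_n$ embeds $\SL_2$ as the $(i,i{+}1)$-block, so no deep input is required. Notably, the argument is intrinsic to the signed-cycle combinatorics and works for any Coxeter element and any reduced expression, so independence of the scalar $(-1)^{n-1}$ from these choices comes for free, without any separate appeal to the Tits uniqueness of the canonical lift.
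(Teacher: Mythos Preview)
Your argument is correct. The determinant trick is clean: once $\tilde{w}$ is identified as a signed permutation matrix over an $n$-cycle, the scalar $\tilde{w}^n$ is $\prod_i \epsilon_i$, and equating the two expressions $(-1)^{n-1}\prod_i \epsilon_i$ and $1$ for $\det(\tilde{w})$ pins this product down with no case analysis.

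The paper does not give its own proof at all; it simply cites \S3.1 of a paper of Zaremsky. So your argument is strictly more informative here. In fact it proves a bit more than stated: any lift of the Coxeter element to a signed permutation matrix of determinant $1$ (not just the Tits canonical lift) satisfies $\tilde{w}^n = (-1)^{n-1}I$. That extra generality dovetails with the remark at the end of \S\ref{section:BH_compat}, where the authors observe that the characteristic polynomial constraint already forces the lift up to conjugacy, and it makes transparent why the answer is independent of the reduced expression chosen for $\hat{w}$. The only implicit choice in your write-up is the standard pinning of $\GL_n(\CC)$, but since any two pinnings are $\hat{G}$-conjugate and $\tilde{w}^n$ is central, this is harmless.
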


\begin{proof}
See \cite[\S3.1]{zaremsky:ppa}.
\end{proof}

We can now describe the image of $\mumin$ in the setting of
depth zero supercuspidal representations of $\GL_{n}(K)$.  Write
$\varphi$ for $\varphi_{\hat{w}}$ (see Definition \ref{def:phiu}) and $\mu$ for $\xi_{\varphi}^{-1}$ .

\begin{proposition} \label{prop:rectifier_agreement}
$\mu$ is unramified and
$\mu(\varpi) = (-1)^{n-1}$.
\end{proposition}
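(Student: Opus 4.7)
The plan is to compute the restriction $\varphi|_{\Weil_L}$ explicitly, pass to the associated character of $T(L)$ via the Langlands correspondence for the $L$-split torus $T_L \cong \Gm^n$, and then descend to a character of $T(K) = L^\times$ via the Tate-norm isomorphism $T(L)_\Gamma \xrightarrow{\sim} T(K)$ established just before the statement of the proposition.

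First I would compute $\varphi|_{\Weil_L}$. Since $L/K$ is unramified of degree $n$, $\Weil_L = I_K \rtimes \langle \Fr^n\rangle$, and $\varphi|_{I_K} \equiv 1$ by Definition \ref{def:phiu}. Because $\hat G = \GL_n(\CC)$ is split, the dual Frobenius $\hat\theta$ is trivial, so $\varphi(\Fr) = \tilde w$ and hence $\varphi(\Fr^n) = \tilde w^n$. Proposition \ref{prop:powers_of_lifts} evaluates this to the scalar $(-1)^{n-1}$ in $\GL_n(\CC)$; viewed in $\hat T = (\CC^\times)^n$ it is the diagonal point $((-1)^{n-1},\ldots,(-1)^{n-1})$.

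Next I would translate this to a character of $T(L) = (L^\times)^n$ using coordinatewise local class field theory, obtaining $\chi((x_\sigma)_{\sigma\in\Gamma}) = \prod_\sigma \eta(x_\sigma)$, where $\eta\colon L^\times \to \CC^\times$ is the unramified character sending $\varpi$ to $(-1)^{n-1}$. This $\chi$ is unramified (hence tamely ramified) and $\Gamma$-invariant, so it descends to $\xi_\varphi$ on $T(L)_\Gamma$.

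Finally I would transport $\xi_\varphi$ to a character on $T(K) = L^\times$ using the Tate-norm isomorphism. With $\Gamma$ acting on $T(L)\cong (L^\times)^n$ by $\tau\cdot (x_\sigma)_\sigma = (\tau(x_{\tau^{-1}\sigma}))_\sigma$, a direct calculation gives $\Nm((\ell,1,\ldots,1)) = (\sigma(\ell))_\sigma$, which is the image of $\ell \in T(K)$ under the diagonal embedding $T(K) \hookrightarrow T(L)$. Hence $\xi_\varphi(\ell) = \chi((\ell,1,\ldots,1)) = \eta(\ell)$, so $\mu = \eta^{-1}$ is unramified with $\mu(\varpi) = (-1)^{n-1}$ (the value being its own inverse). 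The computational backbone is Proposition \ref{prop:powers_of_lifts}; the main point one has to verify carefully is that the combination of coordinatewise local CFT with the Tate-norm identification $T(L)_\Gamma \cong T(K)$ really produces $\eta$ rather than $\eta^n$ or $\eta\circ \Nm_{L/K}$, and this is exactly what the choice of coset representative $(\ell,1,\ldots,1)$ under the Tate norm accomplishes.
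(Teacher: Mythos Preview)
Your argument is correct and follows essentially the same route as the paper: compute $\varphi(\Fr^n)=\tilde w^n=(-1)^{n-1}$ via Proposition~\ref{prop:powers_of_lifts}, pass through local class field theory for $L$, and then use the coset representative $(\ell,1,\dots,1)$ under the Tate--norm isomorphism $T(L)_\Gamma \cong T(K)$ to read off $\mu$. The only difference is cosmetic: the paper evaluates directly at $\varpi$ rather than first writing down the full character $\chi=\prod_\sigma\eta$ on $T(L)$, but the content is identical.
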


\begin{proof}
Let $\sigma$ generate $\Gal(L/K)$.  Then $T(L) \cong \Lx \times \Lx \times \cdots \times \Lx$ and
$$T(K) = \{(x, \sigma(x), \sigma^2(x), \cdots, \sigma^{n-1}(x)) : x \in \Lx \} \cong \Lx.$$
A uniformizer $\varpi$ in $\Kx \subset \Lx$
therefore corresponds to $(\varpi, \varpi, \cdots, \varpi) \in T(K)$, whose
preimage under $\Nm$ is the class of $(\varpi, 1, 1, \cdots, 1)$ in $T(L)_{\Gamma}$.
By \cite[\S 2.4]{serre:LocalClassFieldThy}, $\varpi$ corresponds to $\Fr^n$ under the Artin
reciprocity map for $L$.  Now by Proposition \ref{prop:powers_of_lifts}
and the local Langlands correspondence for tori we get
$\mu(\varpi) = (-1)^{n-1}$.
Finally, $\varphi|_{I_K} \equiv 1$ implies that $\mu$ is
unramified.
\end{proof}

\begin{theorem} \label{thm:bh_agreement}
  If $G = \GL_{n}(K)$ and fixed $T$, the constant function $(T,\xi) \mapsto \mu$ agrees with
  the rectifier of Bushnell-Henniart for depth zero $\xi$.
\end{theorem}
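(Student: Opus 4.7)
The plan is to show that the two characters agree by exhibiting both as unramified characters of $L^\times \cong T(K)$ taking the same value on a uniformizer of $L$. Since $L/K$ is unramified, the fixed uniformizer $\varpi$ of $K$ is also a uniformizer of $L$, and an unramified character of $L^\times$ is trivial on $\mathcal{O}_L^\times$; hence it is determined by its value at $\varpi$.

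First I would invoke Proposition \ref{prop:BH_result1}: any admissible pair $(L/K,\xi)$ in which $L/K$ is unramified and $\xi$ has depth zero automatically has no jumps, so the Bushnell--Henniart rectifier $\mu_\xi$ is unramified and satisfies $\mu_\xi(\varpi)=(-1)^{n-1}$. In particular, $\mu_\xi$ is independent of the depth zero admissible character $\xi$, so the Bushnell--Henniart assignment $\xi\mapsto\mu_\xi$ is, in this restricted setting, a constant function of $\xi$, matching the structure of $\mumin(T,\xi)=\mu$.

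Next I would cite Proposition \ref{prop:rectifier_agreement} to show that our candidate $\mu=\xi_{\varphi_{\hat w}}^{-1}$ is also an unramified character of $T(K)\cong L^\times$ with $\mu(\varpi)=(-1)^{n-1}$. Combined with the identification $T(L)_\Gamma\xrightarrow{\sim}T(K)$ from the vanishing of $\HT{-1}(\Gamma,T(L))$ and $\HT{0}(\Gamma,T(L))$ established at the start of this section, we have two unramified characters of $L^\times$ agreeing on $\varpi$, hence they are identical.

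I do not expect any serious obstacle here: the real content was already proved in Propositions \ref{prop:BH_result1} and \ref{prop:rectifier_agreement}, the computation of the Tate cohomology of $T=\Res_{L/K}\Gm$, and Proposition \ref{prop:powers_of_lifts} identifying $\tilde w^n$ with $(-1)^{n-1}$. The only small thing to keep straight is that, because $L/K$ is unramified, ``unramified character of $L^\times$'' and ``unramified character of $T(K)$ in the sense of Definition \ref{def:rectifier}(1)'' coincide, so that the matching value at $\varpi$ suffices to conclude $\mumin(T,\xi)=\mu_\xi$ for every depth zero $\xi$ with $(T,\xi)\in P_G^{\min}(K)$.
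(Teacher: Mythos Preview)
Your proposal is correct and follows exactly the paper's approach: the paper's proof simply cites Propositions \ref{prop:BH_result1} and \ref{prop:rectifier_agreement}, and you have done the same while spelling out the routine fact that two unramified characters of $L^\times$ agreeing at $\varpi$ must coincide.
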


\begin{proof}
This result follows from Proposition \ref{prop:rectifier_agreement} and Proposition \ref{prop:BH_result1}.
\end{proof}

We end this section by explaining why the Tits group lift $\tilde{w}$ is forced upon us.
Suppose we define
$\varphi' : \Weil_K \rightarrow \GL_{n}(\CC)$ by $\varphi'|_{I_K} \equiv 1$ and
$\varphi'(\Fr)$ to be a lift of an elliptic element $\hat{w}$ in $\hat{W}$.
Then \cite[p. 824]{reeder-debacker:09a} and \cite[\S6]{reeder:08a} imply that the characteristic
polynomial of $\varphi'(\Fr)$ is $X^n - a$, for some $a \in \CCx$.  One can see that,
by arguments analogous to those in Proposition \ref{prop:rectifier_agreement},
$\xi_{\varphi'}(\varpi) = a$.  By Proposition \ref{prop:BH_result1}, we are
forced to set $a = (-1)^{n-1}$.  Finally, one can show by an inductive argument that the
canonical lift $\tilde{w}$ of $\hat{w}$ to $\widetilde{W}$ has characteristic polynomial $X^n - (-1)^{n-1}$,
so that $\varphi'(\Fr)$ is indeed the canonical lift of $\hat{w}$ to $\widetilde{W}$ up to conjugacy.

\bibliographystyle{amsalpha}
\bibliography{Biblio}

\end{document}